\crefname{equation}{}{}
\crefname{lem}{Lemma}{Lemmas}
\crefname{thm}{Theorem}{Theorems}
\newcommand{\dd}{\,{\rm d}}
\newcommand{\rF}{\rho_{_{\!F^*}}}
\newcommand{\gG}{\gamma_{_{\!G}}}
\newcommand{\R}{\,{\mathbb R}}
\newcommand{\diag}[1]{{\rm diag}\left({#1} \right)}
\newcommand{\dual}[1]{\left\langle {#1} \right\rangle}
\newcommand{\prox}[0]{ {\bf prox}}
\newcommand{\nm}[1]{\left\lVert {#1} \right\rVert}
\newcommand{\snm}[1]{\left\lvert {#1} \right\rvert}
\newcommand{\ssnm}[1]
{
	\left\vert\kern-0.25ex
	\left\vert\kern-0.25ex
	\left\vert
	{#1}
	\right\vert\kern-0.25ex
	\right\vert\kern-0.25ex
	\right\vert
}
\def\spher@harm#1{%
	\vbox{\hbox{%
			\offinterlineskip
			\valign{&\hb@xt@2\p@{\hss$##$\hss}\vskip.2ex\cr#1\crcr}%
		}\vskip-.36ex}%
}
\def\gshone{\spher@harm{.}}
\def\gshtwo{\spher@harm{.&.}}
\def\gshthree{\spher@harm{.&.&.}}
\let\gsh\spher@harm
\newtheorem{lem}{Lemma}[section]
\newtheorem{rem}{Remark}[section]
\newtheorem{thm}{Theorem}[section]
\newcolumntype{I}{!{\vrule width 1,5pt}}
\newlength\savedwidth
\newlength\savewidth
\newcounter{mnote}
\let\oldmarginpar\marginpar
\renewcommand\marginpar[1]
\def\@captype{table}\makeatother
\begin{document}
\title{
  \Large \bf A continuous perspective on the inertial corrected primal-dual proximal splitting\thanks{This work was supported by the Foundation of Chongqing Normal University (Grant No. 202210000161) and the Science and Technology Research Program of Chongqing Municipal Education Commission (Grant No.
  	KJZD-K202300505).}}

\author[,1,2]{Hao Luo\thanks{Email: luohao@cqnu.edu.cn}}
\affil[1]{National Center for Applied Mathematics in Chongqing, Chongqing Normal University, Chongqing, 401331, China}
\affil[2]{Chongqing Research Institute of Big Data, Peking University,  Chongqing, 401121, China}


\date{\today}
\maketitle

\begin{abstract}
	We give a continuous perspective on the Inertial Corrected Primal-Dual Proximal Splitting (IC-PDPS) proposed by Valkonen ({\it SIAM J. Optim.}, 30(2): 1391--1420, 2020) for solving saddle-point problems. The algorithm possesses nonergodic convergence rate and admits a tight preconditioned proximal point formulation which involves both inertia and additional correction.
	Based on new understandings on the relation between the discrete step size and rescaling effect, we rebuild IC-PDPS as a semi-implicit Euler scheme with respect to its iterative sequences and integrated parameters. This leads to two novel second-order ordinary differential equation (ODE) models that are equivalent under proper time transformation, and also provides an alternative interpretation from the continuous point of view. Besides, we present the convergence analysis of the Lagrangian gap along the continuous trajectory by using proper Lyapunov functions.  
\end{abstract}

%


\section{Introduction}
\label{sec:intro-H-APDHG}
Let $X$ and $Y$ be two Hilbert spaces and $K:X\to  Y $ a bounded linear operator. We are interested in the composite optimization problem
\begin{equation}\label{eq:min-P}
	\min_{x\in X} \, \mathcal P(x):=G(x)+F(Kx) , 
\end{equation}
where $G:X\to (-\infty,+\infty]$ and $F: Y \to (-\infty,+\infty]$ are proper and lower semicontinuous convex functions. Denote by $F^*$ the Fenchel conjugate of $F$ and  reformulate \cref{eq:min-P} as a convex-concave saddle-point problem
\begin{equation}\label{eq:minmax}
	\min_{x\in X}\max_{y\in Y } \, \mathcal L(x,y) := G(x)+ \dual{Kx,y}-F^*(y).
\end{equation}
The dual problem reads equivalently as 
\begin{equation}\label{eq:min-D}
	\min_{y\in  Y } \, \mathcal D(y):= G^*(-K^* y)+F^*(y),
\end{equation}
with $K^*: Y \to X $ being the conjugate operator of $K$.

Introduce a set-valued monotone operator $H: U \rightrightarrows  U $ by that
\begin{equation}\label{eq:H}
	H(u): = \begin{pmatrix}
		\partial G(x) + K^*y\\\partial F^*(y) - Kx
	\end{pmatrix}\quad \forall\,u = (x,y)\in  U:=X\times Y.
\end{equation} 
Clearly, $\widehat{u}=(\widehat{x},\widehat{y})\in H^{-1}(0)$ if and only if $\widehat{u}$ is a saddle point of \cref{eq:minmax}, which means 
\[
\mathcal L(\widehat{x},y)\leq \mathcal L(\widehat{x},\widehat{y})\leq \mathcal L(x,\widehat{y})\quad\forall\,(x,y)\in  U .
\]
Moreover, $\widehat{x}$ and $\widehat{y}$ are respectively the optimal solutions to the primal problem \cref{eq:min-P} and the dual problem \cref{eq:min-D}.
\subsection{Literature review}
Recent years, continuous-time approach \cite{attouch_fast_2018,Luo2022d,su_dierential_2016,wibisono_variational_2016} has been extensively used to study accelerated first-order methods for unconstrained optimization, such as  Nesterov accelerated gradient (NAG) \cite{Nesterov1983} and  fast iterative shrinkage-thresholding algorithm (FISTA) \cite{beck_fast_2009}:
\begin{equation}\label{eq:nag-intro}
	\left\{
	\begin{aligned}
		{}&x^{i+1} =  \bar x^i-\tau\nabla G(\bar x^i) , \quad \tau\in(0,1/L],\\
		{}&\bar{x}^{i+1}  =x^{i+1}+\lambda_{i+1}\left(\lambda_i^{-1}-1\right)\left(x^{i+1}-x^i\right),\\
		{}&	\lambda_{i+1}^{-1}=1 / 2+\sqrt{\lambda_i^{-2}+1 / 4}.
	\end{aligned}
	\right.
\end{equation}
From continuous point of view, the use of {\it Inertia} or {\it Momentum} $\bar x^i$  usually leads to some continuous second-order ordinary differential equation (ODE): $ \mathcal F(t,x',x'',\nabla f(x)) = 0$, which gives new perspective on the acceleration mechanism. On the other hand, the numerical discretization template of the continuous model also provides an alternative way for designing and analyzing first-order algorithms that differs from classical majoriazation approach or estimate sequence \cite{Nesterov2018}. Bridging first-order and second/high-order methods (Newton or tensor), some generalized models \cite{Attouch2020f,chen_first_2019,Lin2021} involve second-order information (in space) called {\it Hessian-driving damping}: $\mathcal F(t,x',x'',\nabla f(x),\nabla^2f(x)) = 0$. This is also related to high-resolution ODEs based on backward analysis \cite{Lu2022} and high-order Taylor expansion \cite{shi_understanding_2021}.

From unconstrained minimization to the saddle-point problem \cref{eq:minmax}, however, the connection between discrete algorithms and continuous models deserves further investigations. Let us recall the well-known primal-dual hybrid gradient (PDHG) \cite{esser_general_2010} and Chambolle-Pock (CP) method \cite{chambolle_first-order_2011}. He and Yuan \cite{He2014a,he_convergence_2012} recast PDHG and CP as a compact proximal point template
\begin{equation}\label{eq:CP-PPA}
	0\in W_{i+1}H(u^{i+1}) + M_{i+1}(u^{i+1}-u^i),\quad u^i = (x^i,y^i),
\end{equation}
where the step size operator $W_{i+1}$ and the preconditioner $M_{i+1}$ are 
\[
W_{i+1}  =   \diag{\tau_iI,\sigma_{i+1}I}
,\quad M_{i+1} = \begin{pmatrix}
	I&-\tau_i K^*\\-\omega_i\sigma_{i+1} K & I
\end{pmatrix},\quad \omega_i\in[0,1].
\]
For $\omega_i=0$, it recovers PDHG, which however, is not convergent even for convex objectives \cite{He2022}. Fortunately, this instability has been overcome by CP, which actually takes $\omega_i\in(0,1]$. By choosing proper step size, the preconditioner $M_{i+1}$ keeps positive semidefinite and CP converges with the (ergodic) sublinear rate $O(1/N)$ for convex-concave problems. From the continuous perspective, Li-Shi \cite{Lic} and Luo \cite{luo_primal-dual_2022} observed that the choice $\omega_i>0$ brings subtle {\it Velocity correction} that makes the iteration/trajectory more stable; see the equilibrium analysis in \cite[Section 2.3]{luo_primal-dual_2022}. This is related to the derivative feedback in \cite{zeng_distributed_2018} and arises also from other continuous primal-dual models \cite{attouch_fast_2021,Chen2023,Chen2023a,He2024,He2021c,He2022b,Luo2021c,Luo2024b,Luo2023c,Zeng2022}.

To improve the ergodic convergence, Valkonen \cite{valkonen_inertial_2020} presented the inertial corrected primal-dual proximal splitting (IC-PDPS) based on a general corrected inertial framework (cf.\cref{eq:CP-PPA-mod-IC})
\begin{equation}\label{eq:PPI-intro}
	\left\{
	\begin{aligned}
		{}&		0\in W_{i+1}H(u^{i+1}) + M_{i+1}(u^{i+1}-\bar u^i)+\check{M}_{i+1}(u^{i+1}-u^i),\\
		{}&		\bar{u}^{i+1} = u^{i+1} + \Lambda_{i+2}(\Lambda_{i+1}^{-1}-I)(u^{i+1}-u^i),
	\end{aligned}
	\right.
\end{equation}
which combines \cref{eq:CP-PPA} with {\it Inertia} and {\it Correction} against the asymmetry part of $H$.  Equipped with the techniques of testing and gap unrolling argument, the proposed framework is highly self-contained for algorithm designing and nonergodic convergence analysis. However, to the best of our knowledge, little has been known about the continuous analogue to  \cref{eq:PPI-intro}. 
\subsection{Motivation}
Following up the continuous-time approach for unconstrained acceleration methods, a natural question is {\it can we provide an alternative perspective of the enhanced primal-dual solver IC-PDPS from the continuous point of view?} Compared with \cref{eq:CP-PPA}, IC-PDPS involves a much more complicated  combination of the primal-dual sequence $u^i $, the inertial sequence $\bar u_i$, the correction step and six parameters $\{\psi_i,\phi_i,\tau_i,\lambda_i,\mu_i,\sigma_i\}$; see \cref{eq:para-set-ic-pdps}. This makes it nontrivial to derive the continuous model. We emphasis the following two main challenges:
\begin{itemize}
	\item {\it Time step size}:  The standard way to derive the ODE model of \cref{eq:CP-PPA} is introducing some continuous ansatz $u(t_i)=u^i$ that matches the discrete iteration $u^i$ at time $t_i$. Under the constant choice $\tau_{i}=\sigma_{i+1}=\tau\in(0,1/\nm{K}]$, we can simply take the time step size $\Delta_i:=t_{i+1}-t_i=\tau$; see Li-Shi \cite{Lic} and Luo \cite{luo_primal-dual_2022}. However, this naive idea does not work for IC-PDPS since the involved parameters are more than two and it is not easy to determine such a constant choice.
	\item {\it Finite difference}: When using Taylor expansion, one has to decompose the iterative algorithm as either first-order or second-order finite difference template (cf.\cite{Lu2022,shi_understanding_2021,su_dierential_2016}). But for IC-PDPS, the primal and dual variables are coupled with each other and the inertial variable contains dynamically changing parameters. Therefore, more effort shall be paid to find the right difference formulation.
\end{itemize}

In this work, by introducing the concept of rescaled step size, we derive the continuous ODE of IC-PDPS based on proper equivalent first-order finite difference template (instead of second-order form). The IC-PDPS ODE shares the same second-order in time feature as many accelerated models and also contains symmetric velocity correction. For a detailed statement of our main contribution, see \cref{sec:main-contrib}.

\subsection{Notation}
We shall use the same bracket $\dual{\cdot,\cdot}$ as the inner products on $X$ and $ Y $, whenever no confusion arises. Denote by $I$ the identity operator and $\mathcal L(X; Y )$ the set of all bounded linear operations from $X$ to $ Y $. For $S,T\in\mathcal L(X;X)$, we say $S\geq T$ if $S-T$ is positive semi-definite. Given $x,z\in X$, the notation $\dual{x,z}_T :=\dual{Tx,z} $ makes sense for any $T\in\mathcal L(X;X)$, and it induces a semi-norm $\nm{x}_T: =\sqrt{\dual{x,x}_T}$ whenever $T$ is positive semi-definite. 

Let $\widehat{u}=(\widehat{x},\widehat{y})\in H^{-1}(0)$ be a saddle point of \cref{eq:minmax}. Throughout, we use the following two shifted convex functions
\[
\begin{aligned}
	G(x;\widehat{x}): = {}&G(x) -\frac{ \gG}{2}\nm{x-\widehat{x}}^2\quad\forall\,x\in X ,\\
	F^*(y;\widehat{y}): = {}&F^*(y) -\frac{ \rF}{2}\nm{y-\widehat{y}}^2\quad\forall\,y\in  Y ,
\end{aligned}
\]
where $\gG\geq 0$ and $\rF\geq 0$ are respectively the convexity constants of $G$ and $F^*$.
We also introduce an alternate Lagrangian
\begin{equation}\label{eq:alter-L}
	\widehat{	\mathcal L} (x,y): = G(x;\widehat{x})+ \dual{Kx,y}-F^*(y;\widehat{y})\quad\forall\,(x,y)\in  U .
\end{equation}
It is clear that both $\mathcal L(\cdot,\cdot)$ and $\widehat{\mathcal L}(\cdot,\cdot) $ have the same saddle-point(s) $\widehat{u}$. For later use, define two more functions
\[
\begin{aligned}
	\bar G(x;\widehat{x}): = {}&G(x;\widehat{x}) + \dual{K^*\widehat{y},x-\widehat{x}},\quad\bar F^*(y;\widehat{y}): = {}F^*(y;\widehat{y}) - \dual{K\widehat{x},y-\widehat{y}}.
\end{aligned}
\]
Notice that both $\bar G(\cdot;\widehat{x})$ and $\bar F^*(\cdot;\widehat{y})$ are convex. Besides, since $\widehat{u} = (\widehat{x},\widehat{y})\in H^{-1}(0)$, it holds that $\partial \bar G(\widehat{x};\widehat{x}) \ni 0$ and $\partial \bar F^*(\widehat{y};\widehat{y}) \ni 0$, which implies  $\bar G(x;\widehat{x})\geq \bar G(\widehat{x};\widehat{x})=G(\widehat{x}) $ and $\bar F^*(y;\widehat{y})\geq \bar F^*(\widehat{y};\widehat{y}) =F^*(\widehat{y})$ for all $(x,y)\in U $. Moreover, a standard calculation gives
\[
\begin{aligned}
	\mathcal L (x ,\widehat{y}) - \mathcal L (\widehat{x},y )=	{}&	\bar G(x ;\widehat{x})- G(\widehat{x}) + 
	\bar F^*(y ;\widehat{y})- F^*(\widehat{y}).
\end{aligned}
\]
\subsection{Main contribution} 
\label{sec:main-contrib}
\subsubsection{Intrinsic step size {\it v.s.} rescaled step size}
Motivated by the time rescaling effect in continuous level, we give some new understanding on the time step size of accelerated first-order methods. More precisely, in \cref{sec:NAG}, we revisit the continuous model of NAG \cref{eq:nag-intro} together with its dynamically changing parameters. By introducing the {\it intrinsic step size} and {\it rescaled step size}, we obtain two different ODEs (cf. \cref{eq:NAG-int-ode,eq:NAG-res-ode}) that are equivalent in the sense of proper time transformation. As usual, the intrinsic step size of NAG is $\Delta_i = \sqrt{\tau}$, which is related to the (proximal) gradient step length $\tau=1/L$. The rescaled step size $\Delta_i=\lambda_i$ coming from the momentum coefficient seems a little bit ``unrealistic'', which can be viewed as time rescaling in the discrete level. 

In a word, to our understanding, the intrinsic step size shall be explicitly determined by the problem parameters such as proximal step length and Lipschitz/convexity constant. The rescaled step size is more likely depending on the dynamically changing parameters. This shapes our basic idea for deriving continuous ODEs of acceleration methods that involve complicated sequences.
\subsubsection{The continuous model IC-PDPS}
For the six parameters $\{\psi_i,\phi_i,\tau_i,\lambda_i,\mu_i,\sigma_i\}$ in \cref{eq:para-set-ic-pdps}, we introduce three integrated sequences $\{\Phi_i,\Psi_i,\Theta_i\}$ in \cref{eq:aggreg-icpdps} and give the finite difference formulation \cref{eq:para-icpdps} with respect to the rescaled step size $\Delta_i = \lambda_i$. We then rewrite IC-PDPS (cf.\cref{algo:IC-PDPS} ) as a semi-implicit Euler scheme (cf.\cref{eq:diff-x-y,eq:diff-zeta-eta}). By using Taylor expansion and some technical estimates on $\{\lambda_i\}$, we derive the following continuous model of IC-PDPS:
\begin{equation}
	\label{eq:2ndODE-ICPDPS-intro}
	\frac{ 	\Upsilon(t)}{\theta(t)} u''(t)+\left[\Gamma+\frac{ 	\Upsilon(t)}{\theta(t)} \right]u'(t)  + H(u(t)) = 0,
\end{equation}
where 
\[
u(t) = (x(t),y(t)),\quad 
\Upsilon(t) = \diag{ 
	\phi(t) I, \psi(t)I },\quad\text{and}\quad  
\Gamma = \begin{pmatrix}
	\gG I&K^*\\ -K& \rF I
\end{pmatrix}.
\]
The positive parameters $\phi,\,\psi$ and $\theta$ are governed by that
\begin{equation}
	\label{eq:ode-para-intro}
	\begin{aligned}
		{} \phi'(t)=2\gG\theta(t),\quad
		{} \psi'(t)=2\rF\theta(t),\quad
		{} \theta'(t)=  \theta(t),
	\end{aligned}
\end{equation}
with arbitrary nonnegative initial conditions. Recall that the convexity constants of $G$ and $F^*$ are $\gG\geq 0$ and $\rF\geq 0$, respectively.  We call \cref{eq:2ndODE-ICPDPS-intro} the {\it rescaled IC-PDPS ODE}, which contains not only the second-order in time feature of unconstrained models \cite{Luo2022d,su_dierential_2016,shi_understanding_2021} but also the velocity correction $\Gamma u'(t)$ as that in Li-Shi \cite{Lic} and Luo \cite{luo_primal-dual_2022}. Indeed, this can be discovered more clearly from the component wise representation of \cref{eq:2ndODE-ICPDPS-intro}:
\[
\left\{
\begin{aligned}
	{}&	\frac{ 	\phi(t)}{\theta(t)} x''(t)+\left[\gG+\frac{ 	\phi(t)}{\theta(t)} \right]x'(t)  + \nabla G(x(t))+K^*(y(t)+y'(t))= 0,\\
	{}&	\frac{ 	\psi(t)}{\theta(t)} y''(t)+\left[\rF+\frac{ 	\psi(t)}{\theta(t)} \right]y'(t)  + \nabla F^*(y(t))-K(x(t)+x'(t))= 0.		
\end{aligned}
\right.
\]
%
%
%

We also equip the continuous dynamics \cref{eq:2ndODE-ICPDPS-intro} with a proper Lyapunov function 
\[
\mathcal E(t): = \theta(t)\big[\mathcal{L} (x(t),\widehat{y})-\mathcal{L} (\widehat{x},y(t))\big] + \frac{1}{2}\nm{z(t)-\widehat{u}}^2_{\Upsilon(t) },
\]
which satisfies the nonincreasing property (cf.\cref{thm:conv-lyap})
\[
\frac{\dd}{\dd t}\mathcal E(t)\leq 0\quad\Longrightarrow\quad \mathcal E(t)\leq \mathcal E(0).
\]
Notice that by \cref{eq:ode-para-intro} we have $\theta(t)=\theta(0)e^t$, which implies the exponential decay rate of the Lagrangian gap $\mathcal{L} (x(t),\widehat{y})-\mathcal{L} (\widehat{x},y(t))\leq \theta(0)\mathcal E(0)e^{-t}$.

Additionally, we also find the intrinsic step size $\Delta_i=\alpha\in(0,1/\nm{K}]$, which is motivated from the restriction of the rescaled one \cref{eq:cond-lambdai}. Based on this, we derive another model (cf.\cref{eq:int-ode-ic-pdps}) that is equivalent to \cref{eq:2ndODE-ICPDPS-intro} under proper time rescaling. Numerically, we observe that both \cref{eq:2ndODE-ICPDPS-intro,eq:int-ode-ic-pdps} are identical in the phase space and for smaller $\alpha$, the discrete sequences of IC-PDPS gives better approximation to the continuous trajectories; see \cref{fig:icpdps-intode,fig:icpdps-resode}.

\subsection{Organization}
The rest of this paper is organized as follows. In \cref{sec:framework} we will give a brief review of the general corrected inertial framework proposed by \cite[Section 2]{valkonen_inertial_2020} and restate IC-PDPS in \cref{algo:IC-PDPS} with a slight change in parameter setting. In \cref{sec:NAG}, we revisit the continuous-time limit of NAG and propose our new understanding on the step size. After that, we present our main results in \cref{sec:res-ode-icpdps,sec:int-ode-icpdps}, where two continuous-time limit ODEs of IC-PDPS shall be derived and the convergence of the trajectories will be proved by using suitable Lyapunov functions. Finally, some concluding remarks are given in \cref{sec:conclu}.

\section{A General Corrected Inertial Framework}
\label{sec:framework}
In this section, we shall briefly review the {\it general corrected inertial framework} \cite[Section 2]{valkonen_inertial_2020}, regarding the algorithm designing ingredients ({\it inertia} and {\it corrector}) and convergence analysis ({\it testing}). After that, we focus on the Inertial Corrected Primal-Dual Proximal Splitting \cite[Algorithm 4.1]{valkonen_inertial_2020}, which is a concrete example of the abstract framework \cref{eq:CP-PPA-mod-IC}. 
\subsection{The corrected inertial proximal point method}
\label{sec:icppa}
Recall the proximal point method \cref{eq:CP-PPA}  
\begin{equation}
	\label{eq:CP-PPA-mod}
	\tag{PP}
	0\in W_{i+1}H_{i+1}(u^{i+1}) + M_{i+1}(u^{i+1}-u^i),
\end{equation}
where $W_{i+1}\in\mathcal L( U ; U )$ is the {\it step length operator}.
\vskip0.3cm\noindent{\bf Inertia.} As we all know, both Nesterov's accelerated gradient (NAG) method \cite{Nesterov1983} and FISTA \cite{beck_fast_2009} adopt the same template: (proximal) gradient descent + inertial step. Applying this idea to \cref{eq:CP-PPA-mod}, it requires an inertial variable $\bar u^i$ and an invertible {\it inertial operator} $\Lambda_{i+1}\in\mathcal L( U ; U )$ and leads to the inertial scheme
\begin{equation}
	\label{eq:CP-PPA-mod-inertial}
	\left\{
	\begin{aligned}
		{}&0\in W_{i+1}H_{i+1}(u^{i+1}) + M_{i+1}(u^{i+1}-\bar u^i),\\
		{}&		\bar{u}^{i+1} = u^{i+1} + \Lambda_{i+2}(\Lambda_{i+1}^{-1}-I)(u^{i+1}-u^i).
	\end{aligned}
	\right.
\end{equation}
\vskip0.1cm\noindent{\bf Corrector.} For NAG and FISTA, a key ingredient to obtain the nonergodic faster rate $\mathcal O(1/N^2)$ is the {\it gap unrolling argument} (cf.\cite[Section 3.2]{valkonen_inertial_2020}), which, however, can not be applied to \cref{eq:CP-PPA-mod-inertial}, due to some challenging (asymmetric) component $\Gamma_{i+1}\in\mathcal L( U ; U )$ of $H_{i+1}$. This motivates the general corrected inertial method
\begin{equation}
	\label{eq:CP-PPA-mod-IC}
	\tag{PP-I}
	\left\{
	\begin{aligned}
		{}&		0\in W_{i+1}H_{i+1}(u^{i+1}) + M_{i+1}(u^{i+1}-\bar u^i)+\check{M}_{i+1}(u^{i+1}-u^i),\\
		{}&		\bar{u}^{i+1} = u^{i+1} + \Lambda_{i+2}(\Lambda_{i+1}^{-1}-I)(u^{i+1}-u^i),
	\end{aligned}
	\right.
\end{equation}
where $\check{M}_{i+1} := \Gamma_{i+1}(\Lambda_{i+1}^{-1}-I)$ denotes the {\it corrector}.
\vskip0.3cm\noindent{\bf Testing.} To establish an abstract convergence rate for the conceptual algorithm \cref{eq:CP-PPA-mod-IC}, the testing idea has been introduced \cite{valkonen_testing_2020,valkonen_acceleration_2017}. In short, it involves some proper {\it testing operator} $Z_{i+1}\in\mathcal L( U ; U )$ and works with the auxiliary sequence $\{z^i:i\in\mathbb N\}$ defined by that 
\begin{equation}\label{eq:zi}
	z^0 =u^0, \quad 
	z^{i+1} = \Lambda_{i+1}^{-1}u^{i+1} - (\Lambda_{i+1}^{-1}-I)u^i\quad\text{for }i\in\mathbb N.
\end{equation}
The following result has been proved in \cite[Theorem 2.3]{valkonen_inertial_2020}. For the sake of completeness, we restate it as below, together with its original proof. 
\begin{thm}[\cite{valkonen_inertial_2020}]
	\label{thm:conv-test-PP-I}
	Let $\{u^i:i\in\mathbb N\}\subset  U $ be generated from \cref{eq:CP-PPA-mod-IC} and the auxiliary sequence $\{z^i:i\in\mathbb N\}\subset\mathcal  U$ be defined by \cref{eq:zi}, with 
	the initial state $u^0=\bar u^0\in U$, 
	the preconditioners $\{M_{i+1}:i\in\mathbb N\}\subset \mathcal L( U ; U )$ and the invertible inertial operators $\{\Lambda_{i+1}:i\in\mathbb N\}\subset \mathcal L( U ; U )$.
	If there exist the testing operators $\{Z_{i+1}:i\in\mathbb N\}\subset \mathcal L( U ; U )$ such that, for all $i\in\mathbb N$,  
	\begin{equation}\label{eq:cond-IC_PDPS}
		\left\{
		\begin{aligned}
			{}&Z_{i+1}M_{i+1} \geq 0,\\
			{}&	Z_{i+1}M_{i+1} = M_{i+1}^*Z_{i+1}^*,\\
			{}&	\Upsilon_{i+1} + 
			2\Lambda_{i+1}^* Z_{i+1} \Gamma_{i+1}\geq \Upsilon_{i+2},
		\end{aligned}
		\right.
	\end{equation}
	then 
	\begin{equation}\label{eq:conv-est}
		\frac{1}{2}\nm{z^N-\widehat{u}}^2_{\Upsilon_{N+1}}
		+\sum_{i=0}^{N-1}\left(	\mathcal V^{i+1}(\widehat{u})+\frac{1}{2}\nm{z^{i+1}-z^i}^2_{\Upsilon_{i+1}}\right)
		\leq 	\frac{1}{2}\nm{z^0-\widehat{u}}^2_{\Upsilon_{1}}\quad\forall\,N\geq 1,
	\end{equation}
	where $\Upsilon_{i+1}: = \Lambda_{i+1}^* Z_{i+1}M_{i+1} \Lambda_{i+1}$ is self-adjoint and positive semi-definite and the gap function $ \mathcal V^{i+1}(\widehat{u})$ is defined by
	\begin{equation}\label{eq:Vhatu}
		\mathcal V^{i+1}(\widehat{u}):= \left\langle W_{i+1}H_{i+1}\left(u^{i+1}\right)-\Gamma_{i+1}\left(u^{i+1}-\widehat{u}\right), z^{i+1}-\widehat{u}\right\rangle_{\Lambda_{i+1}^* Z_{i+1}}.
	\end{equation}
\end{thm}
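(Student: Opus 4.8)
The plan is to eliminate the inertial and corrector terms in favour of the test sequence $\{z^i\}$ and then run a telescoping (gap unrolling) argument in the variable metric $\dual{\cdot,\cdot}_{\Lambda_{i+1}^*Z_{i+1}}$. First I would record two algebraic identities that follow directly from the definition \cref{eq:zi} and the inertial update in \cref{eq:CP-PPA-mod-IC}. Writing that update at index $i$ as $\bar u^i=u^i+\Lambda_{i+1}(\Lambda_i^{-1}-I)(u^i-u^{i-1})$ and expanding $z^{i+1}-z^i$, a short computation gives $u^{i+1}-\bar u^i=\Lambda_{i+1}(z^{i+1}-z^i)$, which also holds at $i=0$ since $\bar u^0=u^0$; likewise \cref{eq:zi} yields $(\Lambda_{i+1}^{-1}-I)(u^{i+1}-u^i)=z^{i+1}-u^{i+1}$. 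Substituting these together with $\check M_{i+1}=\Gamma_{i+1}(\Lambda_{i+1}^{-1}-I)$ into the first line of \cref{eq:CP-PPA-mod-IC} collapses the scheme to
\[
0\in W_{i+1}H_{i+1}(u^{i+1})+\Gamma_{i+1}(z^{i+1}-u^{i+1})+M_{i+1}\Lambda_{i+1}(z^{i+1}-z^i).
\]

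Next I would test this relation against $z^{i+1}-\widehat u$ in the $\dual{\cdot,\cdot}_{\Lambda_{i+1}^*Z_{i+1}}$ metric. Splitting $\Gamma_{i+1}(z^{i+1}-u^{i+1})=\Gamma_{i+1}(z^{i+1}-\widehat u)-\Gamma_{i+1}(u^{i+1}-\widehat u)$, the piece $-\Gamma_{i+1}(u^{i+1}-\widehat u)$ is exactly the correction appearing in the definition \cref{eq:Vhatu}, so the $H_{i+1}$ contribution assembles into $\mathcal V^{i+1}(\widehat u)$ and, using $\Upsilon_{i+1}=\Lambda_{i+1}^*Z_{i+1}M_{i+1}\Lambda_{i+1}$, one is left with
\[
\mathcal V^{i+1}(\widehat u)=-\dual{\Upsilon_{i+1}(z^{i+1}-z^i),z^{i+1}-\widehat u}-\dual{\Gamma_{i+1}(z^{i+1}-\widehat u),z^{i+1}-\widehat u}_{\Lambda_{i+1}^*Z_{i+1}}.
\]
The first two conditions of \cref{eq:cond-IC_PDPS} make $\Upsilon_{i+1}$ self-adjoint and positive semidefinite, so the three-point identity $\dual{\Upsilon_{i+1}(a-b),a-c}=\tfrac12\nm{a-b}_{\Upsilon_{i+1}}^2+\tfrac12\nm{a-c}_{\Upsilon_{i+1}}^2-\tfrac12\nm{b-c}_{\Upsilon_{i+1}}^2$ applies with $(a,b,c)=(z^{i+1},z^i,\widehat u)$.

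Finally I would absorb the $\Gamma$ quadratic form. After the three-point identity, the terms $-\tfrac12\nm{z^{i+1}-\widehat u}_{\Upsilon_{i+1}}^2$ and $-\dual{\Gamma_{i+1}(z^{i+1}-\widehat u),z^{i+1}-\widehat u}_{\Lambda_{i+1}^*Z_{i+1}}$ recombine into $-\tfrac12\dual{(\Upsilon_{i+1}+2\Lambda_{i+1}^*Z_{i+1}\Gamma_{i+1})(z^{i+1}-\widehat u),z^{i+1}-\widehat u}$, and the third condition of \cref{eq:cond-IC_PDPS} bounds this above by $-\tfrac12\nm{z^{i+1}-\widehat u}_{\Upsilon_{i+2}}^2$. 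This produces the one-step estimate
\[
\mathcal V^{i+1}(\widehat u)+\tfrac12\nm{z^{i+1}-z^i}_{\Upsilon_{i+1}}^2\le\tfrac12\nm{z^i-\widehat u}_{\Upsilon_{i+1}}^2-\tfrac12\nm{z^{i+1}-\widehat u}_{\Upsilon_{i+2}}^2,
\]
whose right-hand side telescopes upon summing over $i=0,\dots,N-1$ to $\tfrac12\nm{z^0-\widehat u}_{\Upsilon_1}^2-\tfrac12\nm{z^N-\widehat u}_{\Upsilon_{N+1}}^2$, and rearranging yields \cref{eq:conv-est}. The main obstacle is the first paragraph: pinning down the two $z$-identities so that the inertial step and the corrector $\check M_{i+1}$ fuse into the single driving term $M_{i+1}\Lambda_{i+1}(z^{i+1}-z^i)$ together with a sign-correct $\Gamma_{i+1}$ piece. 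Once the scheme is in this form the rest is the standard testing machinery, the only delicate point being that the shift from $\Upsilon_{i+1}$ to $\Upsilon_{i+2}$ needed for telescoping is provided precisely by the third inequality of \cref{eq:cond-IC_PDPS}.
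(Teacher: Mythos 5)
Your proposal is correct and follows essentially the same route as the paper's (i.e.\ Valkonen's original) testing proof: the two identities $u^{i+1}-\bar u^i=\Lambda_{i+1}(z^{i+1}-z^i)$ and $(\Lambda_{i+1}^{-1}-I)(u^{i+1}-u^i)=z^{i+1}-u^{i+1}$ are exactly how the paper collapses the scheme (cf.\ the reformulation in \cref{app:pf-diff-icpdps}), after which the test against $z^{i+1}-\widehat u$ in the $\Lambda_{i+1}^*Z_{i+1}$ metric, the three-point identity for the self-adjoint PSD operator $\Upsilon_{i+1}$, the absorption of the $\Gamma_{i+1}$ quadratic form via the third condition of \cref{eq:cond-IC_PDPS}, and the telescoping sum are the standard machinery. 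The one-step estimate and its summation match \cref{eq:conv-est} exactly, including the base case $i=0$ handled by $\bar u^0=u^0=z^0$.
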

\begin{rem}
	\label{rem:almost}
	\cref{thm:conv-test-PP-I} gives almost the desired nonergodic convergence result. It remains to find 
	\begin{itemize}
		\item the lower bound of the summation over $\mathcal V^{i+1}(\widehat{u})$ (via gap unrolling argument),
		\item and the parameter setting (together with its growth estimates).
	\end{itemize}
\end{rem}

\subsection{The inertial corrected primal-dual proximal splitting}
\label{sec:icpdps}
Based on the conceptual algorithm template \cref{eq:CP-PPA-mod-IC}, IC-PDPS \cite[Algorithm 4.1]{valkonen_inertial_2020} can be derived from the following choices (cf.\cite[Eq.(4.1)]{valkonen_inertial_2020}):
\begin{itemize}
	\item $	\textit{Inertial operator: }
	\Lambda_{i+1} := \diag{ 
		\lambda_iI,\mu_{i+1}I }$ with $\lambda_i,\,\mu_{i+1}>0$,
	\item $	\textit{Testing operator: }
	Z_{i+1} := \diag{ 
		\phi_iI,\psi_{i+1}I }$ with $\phi_i,\,\psi_{i+1}>0$,
	\item $	\textit{Step length operator: }
	W_{i+1} := \diag{ 
		\tau_iI,\sigma_{i+1}I }
	$ with $\tau_i,\,\sigma_{i+1}>0$,
	\item $\textit{Preconditioner: }
	M_{i+1} :=\begin{pmatrix}
		I & -\mu_{i+1}^{-1}\tau_i K^*\\
		-\lambda_i^{-1}\sigma_{i+1}\omega_i K& I
	\end{pmatrix}$ with $\omega_i = \frac{\lambda_{i}\phi_i\tau_i}{\lambda_{i+1}\phi_{i+1}\tau_{i+1}}$,
	\item $\textit{Corrector: }
	\check{M}_{i+1}: = \Gamma_{i+1}(\Lambda_{i+1}^{-1}-I)$ with $\Gamma_{i+1}= W_{i+1}\Gamma
	$ and $ \Gamma = \begin{pmatrix}
		\gG  I&K^*\\ -K& \rF I
	\end{pmatrix}$.
\end{itemize}
Indeed, with this setting, 
%
the condition \cref{eq:cond-IC_PDPS} in \cref{thm:conv-test-PP-I} becomes (cf.\cite[Lemmas 4.1 and 4.2]{valkonen_inertial_2020})
	\begin{equation}\label{eq:cond-ic-pdps}
		\left\{
		\begin{aligned}
			{}&\psi_{i+1}\mu_{i+1}^2\geq \phi_i\tau_i^2\nm{K}^2,\\
			{}&	\lambda_{i+1} \phi_{i+1} \tau_{i+1}=\mu_{i+1} \psi_{i+1} \sigma_{i+1},\\
			{}&	\lambda_i^2 \phi_i\left(1+2 \gG  \tau_i \lambda_i^{-1}\right)   \geq \lambda_{i+1}^2 \phi_{i+1} ,\\
			{}&	\mu_{i+1}^2 \psi_{i+1}\left(1+2 \rF \sigma_{i+1} \mu_{i+1}^{-1}\right)   \geq \mu_{i+2}^2 \psi_{i+2},
		\end{aligned}
		\right.
	\end{equation}
	which promises that (cf.\cref{eq:conv-est})
	\begin{equation}\label{eq:conv-est-icpdps}
		\frac{1}{2}\nm{z^N-\widehat{u}}^2_{\Upsilon_{N+1}}
		+\sum_{i=0}^{N-1}\left(	\mathcal V^{i+1}(\widehat{u})+\frac{1}{2}
		\nm{z^{i+1}-z^i}^2_{\Upsilon_{i+1}}\right)
		\leq 	\frac{1}{2}\nm{z^0-\widehat{u}}^2_{\Upsilon_{1}},
	\end{equation}
	where
	\[
	\Upsilon_{i+1} =  \Lambda_{i+1}^* Z_{i+1}M_{i+1} \Lambda_{i+1} = 
	\begin{pmatrix}
		\phi_i\lambda_{i}^2I & -\lambda_{i}\phi_i\tau_i K^*\\
		- \lambda_{i}\phi_i\tau_i
		K& \psi_{i+1}\mu_{i+1}^2I
	\end{pmatrix},
	\]
	and the gap function $\mathcal V^{i+1}(\widehat{u})$ (cf.\cref{eq:Vhatu}) is
	\[
	\mathcal V^{i+1}(\widehat{u})=\lambda_{i}\phi_i\tau_i\dual{\partial \bar G(x^{i+1};\widehat{x}),\zeta^{i+1}-\widehat{x}}+\mu_{i+1}\psi_{i+1}\sigma_{i+1}
	\dual{\partial \bar F^*(y^{i+1};\widehat{y}),\eta^{i+1}-\widehat{y}}.
	\]
	Above, recall that the auxiliary sequence $z^i = (\zeta^i,\eta^i)\in U$  consists of (cf.\cref{eq:zi})
	\begin{equation} \label{eq:zetai-etai}
		\left\{
		\begin{aligned}
			\zeta^0 ={}& x^0,	&&\zeta^{i+1} = \lambda_{i}^{-1}x^{i+1} - (\lambda_{i}^{-1}-1)x^i,\\
			\eta^0 ={}&y^0,&&
			\eta^{i+1} = \mu_{i+1}^{-1}y^{i+1} - (\mu_{i+1}^{-1}-1)y^i.
		\end{aligned}
		\right.
	\end{equation}

	\vskip0.1cm\noindent{\bf Gap unrolling argument.}		
	In \cite[Lemma 3.2]{valkonen_inertial_2020}, the gap unrolling argument is developed to establish the lower bound of the summation over $\mathcal V^{i+1}(\widehat{u})$, which gives the estimate of the objective (Lagrangian) gap. In short, under the recurrence condition 		
	\begin{equation}\label{eq:phitaui}
		\left\{
		\begin{aligned}
			{}&			\phi_i \tau_i  \geq(1- \lambda_{i+1}) \phi_{i+1}\tau_{i+1},\\
			{}&	\psi_{i+1}\sigma_{i+1} \geq(1- \mu_{i+2}) \psi_{i+2}\sigma_{i+2},
		\end{aligned}
		\right.
	\end{equation}
	a telescope procedure turns  \cref{eq:conv-est-icpdps} into 
	\begin{equation}\label{eq:conv-icpdps}
		\begin{aligned}
			{}&\phi_{N-1}\tau_{N-1}\left( \bar G(x^{N};\widehat{x})- G(\widehat{x})\right)
			+\psi_{N}\sigma_N\left(\bar F^*(y^{N};\widehat{y})- F^*(\widehat{y})\right)\\
			{}&\qquad+\frac{1}{2}\nm{z^N-\widehat{u}}^2_{\Upsilon_{N+1}}
			+\frac{1}{2}\sum_{i=0}^{N-1} 
			\nm{z^{i+1}-z^i}^2_{\Upsilon_{i+1}} \leq 	\frac{1}{2}\nm{z^0-\widehat{u}}^2_{\Upsilon_{1}}+C_0(\widehat{u}),
		\end{aligned}
	\end{equation}
	where $	C_0(\widehat{u}):=\psi_{1}\sigma_1(1-\mu_1)\left(\bar F^*(y^0;\widehat{y})-F^*(\widehat{y})\right)
	+\phi_{0}\tau_{0}(1-\lambda_0)\left(\bar G(x^{0};\widehat{x})- G(\widehat{x})\right)$.
	\vskip0.3cm\noindent{\bf Parameter setting and overall algorithm.} 
	In view of \cref{eq:conv-icpdps}, it remains to determine the parameter sequences $\{\psi_i,\phi_i,\tau_i,\lambda_i,\mu_i,\sigma_i:\,i\in\mathbb N\}$ and establish the growth estimates of $\phi_N\tau_N$ and $\psi_N\sigma_N$.
	Collecting \cref{eq:cond-ic-pdps,eq:phitaui} yields the final parameter setting
	\begin{equation}\label{eq:para-set-ic-pdps}
		\left\{
		\begin{aligned}
			{}&\psi_{i+1}\mu_{i+1}^2\geq \phi_{i}\tau_{i}^2\nm{K}^2,\\		
			{}&	\lambda_{i+1} \phi_{i+1} \tau_{i+1}=\mu_{i+1} \psi_{i+1} \sigma_{i+1},\\			
			{}&  \phi_i \tau_i  \geq(1- \lambda_{i+1}) \phi_{i+1}\tau_{i+1},\\
			{}&	\psi_{i+1}\sigma_{i+1} \geq(1- \mu_{i+2}) \psi_{i+2}\sigma_{i+2},\\
			{}&	\lambda_{i+1}^2 \phi_{i+1}\leq \lambda_i^2 \phi_i+2 \gG  \phi_i\tau_i \lambda_i ,\\
			{}&	\lambda_{i+1}^2 \psi_{i+1}\leq \lambda_{i}^2 \psi_{i}+2 \rF \psi_{i}\tau_{i} \lambda_{i}.
		\end{aligned}
		\right.
	\end{equation}
	Notice that \cref{eq:CP-PPA-mod-IC} itself involves only four sequences $\{\tau_i,\lambda_i,\mu_i,\sigma_i:\,i\in\mathbb N\}$ and the left two $\{\psi_i,\phi_i:\,i\in\mathbb N\}$ come from the testing operator in \cref{thm:conv-test-PP-I}.
	%
	In \cite[Theorems 4.5-4.8]{valkonen_inertial_2020}, with additional constraints (see \cite[Eq.(4.10c)]{valkonen_inertial_2020})
	\begin{equation}\label{eq:para-icpdps-1}
		\phi_{i+1}\tau_{i+1} = \psi_{i+1}\sigma_{i+1},\quad \lambda_{i+1} = \mu_{i+1},
	\end{equation}
	specific choices of the parameters have been given under four scenarios: (i) $\gG=\rF=0$, (ii) $\gG=0<\rF$, (ii) $\gG>0=\rF$ and (iv) $ \gG\rF >0$.
	
	For simplicity, we consider the equality case
	\begin{equation} \label{eq:para-icpdps-2}
		\left\{
		\begin{aligned}			
			{}&  \phi_i \tau_i  =(1- \lambda_{i+1}) \phi_{i+1}\tau_{i+1},\\
			{}&	\lambda_{i+1}^2 \phi_{i+1}-\lambda_i^2 \phi_i= 2 \gG  \phi_i\tau_i \lambda_i ,\\
			{}&	\lambda_{i+1}^2 \psi_{i+1}-\lambda_{i}^2 \psi_{i}=2 \rF \psi_{i}\tau_{i} \lambda_{i},\\
			{}&\psi_{i}\lambda_{i}^2= \phi_{i}\tau_{i}^2/\alpha^2,\quad\alpha\in(0,1/\nm{K}],
		\end{aligned}
		\right.
	\end{equation}
	which gives a simplified version of \cref{eq:para-set-ic-pdps}.
	In this last equation, we shift $\psi_{i+1}\lambda_{i+1}^2$ to $\psi_{i}\lambda_{i}^2$. This meets the first condition in \cref{eq:para-set-ic-pdps} since $\mu_{i+1}=\lambda_{i+1}$ and $\psi_{i+1}\lambda_{i+1}^2\geq\psi_{i}\lambda_{i}^2$. Finally, combining \cref{eq:CP-PPA-mod-IC} with \cref{eq:para-icpdps-1,eq:para-icpdps-2} leads to \cref{algo:IC-PDPS}. We mention that \cref{algo:IC-PDPS} differs from the original IC-PDPS \cite[Algorithm 4.1]{valkonen_inertial_2020} only in the parameter setting.
		
		\begin{algorithm}[H]
			\caption{IC-PDPS}
			\label{algo:IC-PDPS} 
			\begin{algorithmic}[1] 
				\REQUIRE  Problem parameters: $\gG ,\,\rF \geq 0$ and $\alpha\in(0,1/\nm{K}]$.			\\
				\quad ~~~Initial guesses: $\zeta^0,\,x^0\in\R^n$ and  $\eta^0,\,y^0\in\R^m$.\\
				\quad ~~~Initial parameters: $\phi_0,\psi_0,\tau_0>0$.
				\FOR{$i=0,1,\ldots$} 
				\STATE Compute $\lambda_i = \sqrt{\phi_i/\psi_i}\,\tau_i/\alpha$.
				\STATE Compute $c_i =  \lambda_{i}^2 \phi_{i}+2 \gG \phi_{i}\tau_{i} \lambda_{i}$ and $ d_i=\lambda_{i}^2 \psi_{i}+2 \rF \psi_{i}\tau_{i} \lambda_{i}$.
				\STATE Update $\tau_{i+1} = \frac{d_i\alpha^2}{\sqrt{c_id_i}\alpha+\phi_i\tau_i },\,\phi_{i+1} = \frac{d_i\alpha^2}{\tau_{i+1}^2}$ and $\psi_{i+1} =\frac{d_i^2\alpha^2}{c_i\tau_{i+1}^2}$.
				\STATE Compute $\lambda_{i+1} = \sqrt{c_i/d_i}\,\tau_{i+1}/\alpha$ and $ \sigma_{i+1} = \tau_{i+1}c_i/d_i$.
				\STATE
				\STATE Compute $s_i = \gG \tau_i(\lambda_i^{-1}-1)$  and $\widehat{x}^i=x^i+\lambda_i(\zeta^i-x^i) /\left(1+s_i\right)$.
				\STATE Update $x^{i+1}=\prox_{\widetilde{\tau}_i  G}(	\widehat{x}^i -\widetilde{\tau}_i  K^*\eta^i ),\quad \widetilde{\tau}_i = \tau_i/(1+s_i)$.\label{algo:x}			
				\STATE Update $\zeta^{i+1}=x^i+\lambda_i^{-1}(x^{i+1} -x^i)$. \label{algo:zeta}			
				\STATE
				\STATE Compute  $t_{i+1} = \rF \sigma_{i+1}(\lambda_{i+1}^{-1}-1)$ and $	\widehat{y}^i=y^i+\lambda_{i+1}(\eta^i-y^i) /\left(1+t_{i+1}\right) $.
				\STATE Compute $ 	\bar{u}^{i+1}=\zeta^{i+1}+\omega_i\left(\zeta^{i+1}-\zeta^i\right)$ with $  \omega_i = \frac{\lambda_{i}\phi_i\tau_i}{\lambda_{i+1}\phi_{i+1}\tau_{i+1}}$.
				\STATE Update $y^{i+1}=\prox_{\widetilde{\sigma}_{i+1} F^*}\left(	\widehat{y}^i + \sigma_{i+1} K\bar u^{i+1}\right),\quad \widetilde{\sigma}_{i+1}= \sigma_{i+1}/(1+t_{i+1})$.	
				\label{algo:y}				 
				\STATE Update $\eta^{i+1}=y^i+\lambda_{i+1}^{-1}(y^{i+1} -y^i)$.	\label{algo:eta}
				\ENDFOR
			\end{algorithmic}
		\end{algorithm}
\section{Continuous-time NAG Revisited}
\label{sec:NAG}
In this section, we revisit the continuous-time model of Nesterov accelerated gradient (NAG) by using our new understanding on the relation between the step size and the rescaling effect. To simplify the presentation but maintain the key idea, let us start from the standard NAG \cite{Nesterov2018} (or FISTA \cite{beck_fast_2009} without proximal calculation):
\begin{equation}\label{eq:NAG}
	\tag{NAG} 
	\left\{
	\begin{aligned}
		{}&x^{i+1} =  \bar x^i-\tau\nabla G(\bar x^i) , \quad \tau\in(0,1/L],\\
		{}&\bar{x}^{i+1}  =x^{i+1}+\lambda_{i+1}\left(\lambda_i^{-1}-1\right)\left(x^{i+1}-x^i\right),\\
		{}&	\lambda_{i+1}^{-1}=1 / 2+\sqrt{\lambda_i^{-2}+1 / 4},
	\end{aligned}
	\right.
\end{equation}
which solves \cref{eq:min-P} only with the smooth objective $G$ that has $L$-Lipschitz continuous gradient. Notice that \cref{eq:NAG} corresponds to \cref{eq:CP-PPA-mod-IC} with $H_{i+1}(x)=  \nabla G\left(\bar x^i\right) $ and the following setting:
\begin{itemize} 
	\item $	\textit{Testing operator: }
	Z_{i+1} =  
	\phi_iI $ with $\phi_{i+1}=\phi_i/(1-\lambda_{i+1})$,
	\item $	\textit{Step length operator: }
	W_{i+1} = 
	\tau I
	$ with $\tau\in(0,1/L]$,	
	\item $	\textit{Inertial operator: }
	\Lambda_{i+1} = 
	\lambda_iI$ with $\lambda_i>0$,	
	\item $\textit{Preconditioner: }
	M_{i+1} =I$,
	\item  
	$\textit{Corrector : }\check{M}_{i+1} = 0$.
\end{itemize}
The last equation in \cref{eq:NAG} for the sequence $\{\lambda_i:\,i\in\mathbb N\}$ actually meets the condition \cref{eq:cond-IC_PDPS}.

Although the low-resolution ODE and its high-resolution variant of \cref{eq:NAG} with explicit parameters $\lambda_{i} = 2/(i+2)$ have been derived by Su et al. \cite{su_dierential_2016} and Shi et al. \cite{shi_understanding_2021}, we will further discuss more about the step size and rebuild the low-resolution models of \cref{eq:NAG} together its parameter sequence $\{\lambda_i:\,i\in\mathbb N\}$  via proper first-order difference-like formulations. 
\subsection{Intrinsic NAG ODE}
Recall the auxiliary sequence $\{z^i:\,i\in\mathbb N\}$ in \cref{eq:zi}, which leads to
\begin{equation}\label{eq:barxi}
	x^{i+1}-\bar  x^i 
	={} \lambda_{i}(z^{i+1}-z^i),\quad
	z^{i+1}-x^{i+1}={} 	\left(\lambda_i^{-1}-1\right)\left(x^{i+1}-x^i\right).
\end{equation}
Based on this, we drop the inertial sequence $\{\bar x^i:\,i\in\mathbb N\}$ and rewrite \cref{eq:NAG} as follows
\begin{equation}\label{eq:diff-NAG}
	\left\{
	\begin{aligned}
		\lambda_{i}(z^{i+1}-z^i)= 			&-\tau \nabla G(\bar x^i) ,\\
		x^{i+1} -x^i=			{}&\lambda_i(	z^{i+1} -x^i),\\
		\lambda_{i+1}^{-2}-\lambda_{i}^{-2}={}&\lambda_{i+1}^{-1}.
	\end{aligned}
	\right.
\end{equation}
To derive the continuous-time ODE, the standard way is firstly introducing proper time $s_i$ and the ansatz $\widetilde{x}(s_i )=x^i$ and then plugging them into \cref{eq:diff-NAG} to find the leading term. But the problem is how to determine the ``time step size'' $\Delta_i=s_{i+1}-s_i $ along the trajectory $\widetilde{x}(s_i)$?  Note that $\tau$ is the proximal step size but not a reasonable candidate for the time step size since from the spectral analysis in \cite[Section 2]{Luo2022d}, acceleration method allows a larger one $ \sqrt{\tau}$. 

Motivated by this, a more natural one shall be $
\Delta_i=\sqrt{\tau}$, which is called the {\it intrinsic step size}. Observing that the second and the third equations in \cref{eq:diff-NAG} have no explicit relation with $\sqrt{\tau}$, we rearrange \cref{eq:diff-NAG} to obtain
\begin{equation}\label{eq:intrinsic-NAG-ode}
	\tag{Intrinsic NAG}
	\left\{
	\begin{aligned}
		{}& \frac{z^{i+1}-z^i}{\sqrt{\tau}}= - \frac{\sqrt{\tau}}{\lambda_{i}}  \nabla G(\bar x^i) , \\
		{}&	\frac{\sqrt{\tau}}{\lambda_{i}}\cdot\frac{x^{i+1}-x^i}{\sqrt{\tau}}= z^{i+1}  -x^i,\\
		{}&\frac{\left[\frac{\sqrt{\tau}}{\lambda_{i+1}}\right]^{2}-\left[\frac{\sqrt{\tau}}{\lambda_{i}}\right]^{2}}{\sqrt{\tau}}=\frac{\sqrt{\tau}}{\lambda_{i+1}}.
	\end{aligned}
	\right.
\end{equation}
The above rewriting is called the intrinsic NAG, which gives the finite difference form of \cref{eq:NAG} together with its parameter. Introduce the intrinsic time 
\begin{equation}\label{eq:time-iss}
	s_i =\sum_{j=0}^{i-1}\Delta_j=i\sqrt{\tau}
\end{equation}
and the ansatz $\widetilde{X}(s_i):= (\widetilde{x}(s_i ),\, \widetilde{{z}}(s_i ),\, \widetilde{\theta}(s_i )) = (x^i,z^i,\sqrt{\tau}/\lambda_i)$. Assuming the objective $G(x)$ is smooth enough, we can use Taylor expansion and ignore the high order term $O(\sqrt{\tau})$ to get the {\it intrinsic NAG ODE} (cf. \cref{app:NAG-int-ode})
\begin{equation}\label{eq:NAG-int-ode}
	\begin{aligned}
		{}&	 \widetilde{z}'(s) =- \widetilde{\theta}(s)\nabla G(\widetilde{x}(s)), \quad 
		{}\widetilde{\theta}(s)	\widetilde{x}'(s) = \widetilde{z}(s)-\widetilde{x}(s),\quad \widetilde{\theta}'(s)=1/2,\quad s\geq 0.
	\end{aligned}
\end{equation} 
In addition, eliminating $\widetilde{z}(s)$ gives a second-order ODE
\begin{equation}\label{eq:NAG-int-ode-2nd}
	\widetilde{x}''(s) + \frac{3}{2\widetilde{\theta}(s)}\widetilde{x}'(s) + \nabla G(\widetilde{x}(s)) = 0,\quad s\geq 0.
\end{equation}
\begin{rem}
	Actually, we have $  \widetilde{\theta}(s) = \widetilde{\theta}(0)+s/2$. Hence, \cref{eq:NAG-int-ode} is closely related to the ODE derived by Su et al. \cite{su_dierential_2016}, which corresponds to $\widetilde{\theta}(0)=0$. We also note that both Su et al. \cite{su_dierential_2016} and Shi et al. \cite{shi_understanding_2021} used the time \cref{eq:time-iss} and worked only with the ansatz $\widetilde{x}(s_i)=x^i$. However, our derivation is based on the first-order difference-like formulation \cref{eq:intrinsic-NAG-ode}, which involves both the iteration sequence $(x^i,z^i)$ and the parameter $\lambda_i$.
\end{rem}
\subsection{Rescaled NAG ODE}
In view of the roles that $\tau$  and  $\lambda_i$ play in \cref{eq:NAG}, we confirm that they are totally different. More precisely, $\tau $ is the proximal step size while $\lambda_{i}$ is used for the inertial sequence. Inspired by the time rescaling effect in continuous level \cite[Section 3.2]{Luo2022d}, we take another look at \cref{eq:NAG} by considering $	\Delta_i = \lambda_i$, which is called the {\it rescaled step size}.
The sharp decay behavior of $\lambda_i$ is given in \cref{lem:ub-lb-lambdai}, which shows if we ignore the logarithm factor, then $\lambda_i\sim 2/(i+2/\lambda_0)$ and the rescaled time becomes
\begin{equation}\label{eq:time-rss}
	t_i =\sum_{j=0}^{i-1}\Delta_j\sim  2\ln (1+\lambda_0i/2).
\end{equation}

To work with the rescaled step size $	\Delta_i = \lambda_i$, we reformulate \cref{eq:diff-NAG} by that
\begin{equation}\label{eq:rescale-NAG-ode}
	\tag{Rescaled NAG}
	\left\{
	\begin{aligned}
		{}&\frac{z^{i+1}-z^i}{\lambda_{i}}= -\frac{\tau}{\lambda_{i}^2}  \nabla G(\bar x^i) , \\
		{}&	\frac{x^{i+1}-x^i}{\lambda_{i}}=z^{i+1}  -x^i,\\
		{}&\frac{\frac{\tau}{\lambda_{i+1}^2}-\frac{\tau}{\lambda_{i}^2}}{\lambda_{i+1}}=\frac{\tau}{\lambda_{i+1}^2},
	\end{aligned}
	\right.
\end{equation}
which is called the rescaled NAG. Similarly as before, introducing the  ansatz
\[
X(t_i):=(x(t_i ),{z}(t_i ),\theta (t_i )) = (x^i,z^i,\sqrt{\tau}/\lambda_i),
\]
we are able to derive the {\it rescaled NAG ODE} (cf. \cref{app:NAG-res-ode})
\begin{equation}\label{eq:NAG-res-ode}
	\begin{aligned}
		{}&	{z}'(t) =-\theta^2(t)  \nabla G(x(t)), \quad
		{}	x'(t) = {z}(t)-x(t),\quad \theta '(t) = \theta(t)/2,\quad  t\geq 0.
	\end{aligned}
\end{equation} 
Eliminating ${z}(t)$ gives a second-order ODE
\[
x''(t) +x'(t) +\theta^2(t) \nabla G(x(t)) = 0,\quad t\geq 0.
\]
Since $\theta(t) = \theta(0)e^{t/2}$, this coincides with the Nesterov  accelerated gradient flow model (for convex case) derived in \cite[Section 3.1]{Luo2022d}.

\begin{figure}[H]
	\centering
	\includegraphics[width=14cm]{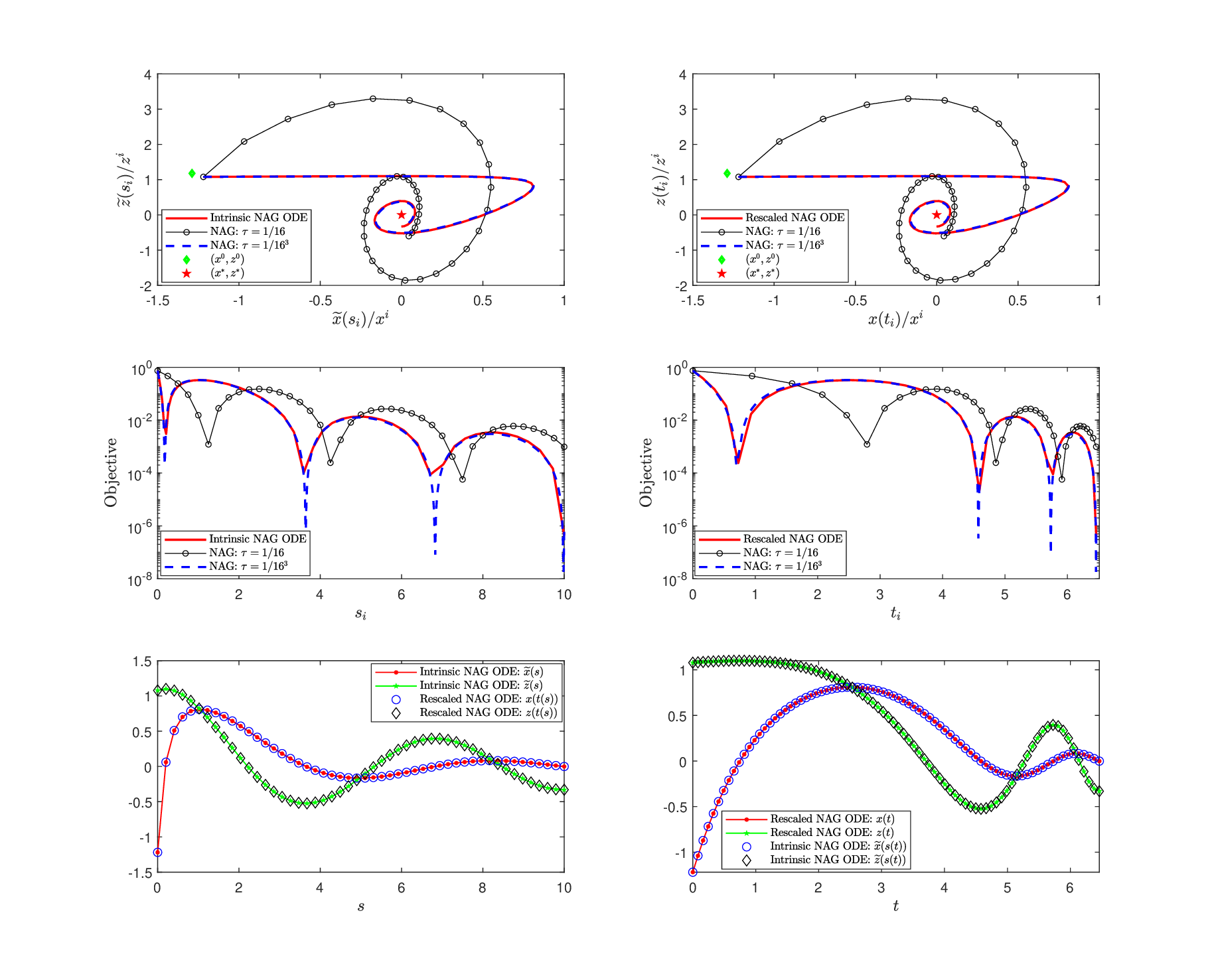}
	\caption{Numerical illustration of \cref{eq:NAG}, intrinsic NAG ODE \cref{eq:NAG-int-ode} and rescaled NAG ODE \cref{eq:NAG-res-ode} with the same initial values. The objective is simply quadratic $G(x)=x^2/2$ in one dimension.  In the top line, we see smaller $\tau$ leads to better approximations of the continuous trajectories. In the middle line, we also report the convergence behavior of the objective. In the bottom line, $X(t)$ and $\widetilde{X}(s)$ are different in terms of their own time coordinates but under the rescale relations $t(s)=2\ln(1+0.5s/\theta(0))$ and $s(t)=2\theta(0)(e^{t/2}-1)$, they coincide with each other. This also agrees with what we have observed from the phase space (top line).}
	\label{fig:NAG-ode-test1}
\end{figure}

\begin{rem}	
	In \cite[Section 3.2]{Luo2022d}, it has been discussed that \cref{eq:NAG-int-ode,eq:NAG-res-ode} are equivalent in the sense of time rescaling. Namely, under the relation $t = 2\ln (1+0.5s/\theta(0))$, we have $
	X(t)=\widetilde{X}(s)$. In view of \cref{eq:time-iss,eq:time-rss} and the relation $ \theta _0=\sqrt{\tau}/\lambda_0$, we find the equivalence of discrete time $t_i \sim 2\ln(1+0.5 s_i /\theta_0)$. The following diagram further explains the connection between different forms of \cref{eq:NAG} and the corresponding ODE models. For a numerical illustration, see \cref{fig:NAG-ode-test1}. 
	
	\[
		\xymatrix{
			& 
			\text{\cref{eq:intrinsic-NAG-ode} $\widetilde{X}(s_i)$} \ar[d] \ar[rr]^{ }  &&   \text{Intrinsic NAG ODE \cref{eq:NAG-int-ode} $\widetilde{X}(s)$}  \ar[d]         \\
			\text{\cref{eq:NAG}} \ar[dr]_{ \Delta_i=\lambda_i}  \ar[ur]^{ \Delta_i=\sqrt{\tau}} 	&{ t_i }\sim 2\ln {(1+ 0.5s_i /\theta_0)}&&{t}= 2\ln { (1+0.5s/\theta(0))}\\
			&   \text{\cref{eq:rescale-NAG-ode} $X(t_i)$}       \ar[u]
			\ar[rr]^{ } &&    \text{Rescaled NAG ODE \cref{eq:NAG-res-ode} $X(t)$}
			\ar[u]
		}     
	\] 
\end{rem} 
\begin{rem}
	Our novel understanding on the discrete time step size for \cref{eq:NAG} shapes our basic idea for deriving continuous-time model of acceleration methods that involve complicated sequences and parameters. Usually, we can work with the intrinsic step size that can be determined explicitly from the iteration (proximal step or gradient step). However, for general cases, it is not easy to find the intrinsic step size, especially for primal-dual methods. As an important application, in the next section we will further show that the idea of rescaled step size does make sense. This paves the way for establishing the continuous model of \cref{algo:IC-PDPS}  which is more complicated than \cref{eq:NAG}.
\end{rem}
\section{The Continuous Model of IC-PDPS}
\label{sec:res-ode-icpdps}
In this section, by assuming the objectives $G$ and $F^*$ are smooth enough, we shall derive the continuous ODE of \cref{algo:IC-PDPS}:
\begin{equation}\label{eq:ode-ic-pdps}
	\left\{
	\begin{aligned}
		{}&x'(t)=  \zeta(t)-x(t),\\
		{}&y'(t)=  \eta(t)-y(t),\\		
		{}&	\phi(t) \zeta'(t) = \theta(t)\left[\gG(x(t)-\zeta(t))-\nabla G(x(t))-K^*\eta(t)\right],\\
		{}&	\psi(t) \eta'(t) = \theta(t)\left[\rF(y(t)-\eta(t))-\nabla F^*(y(t))+K\zeta(t)\right],
	\end{aligned}
	\right.
\end{equation}
where $\phi,\,\psi$ and $\theta$ are governed by the continuous limit of \cref{eq:para-icpdps-2}:
\begin{equation}\label{eq:ode-para}
	{}\phi'(t)= 2\gG\theta(t),\quad
	{} \psi'(t)=2\rF\theta(t),\quad
	{} \theta'(t)= \theta(t).
\end{equation}  
Eliminating $\zeta(t)$ and $\eta(t)$ gives a second-order ODE system
\begin{equation}\label{eq:ic-pdps-2nd-ode}
	\left\{
	\begin{aligned}
		{}&	\frac{ 	\phi(t)}{\theta(t)} x''(t)+\left[\gG+\frac{ 	\phi(t)}{\theta(t)} \right]x'(t)  + \nabla G(x(t))+K^*(y(t)+y'(t))= 0,\\
		{}&	\frac{ 	\psi(t)}{\theta(t)} y''(t)+\left[\rF+\frac{ 	\psi(t)}{\theta(t)} \right]y'(t)  + \nabla F^*(y(t))-K(x(t)+x'(t))= 0.		
	\end{aligned}
	\right.
\end{equation}
Moreover, introducing $u(t)=(x(t),y(t))$ and $	\Upsilon(t) =  \diag{ 
	\phi(t)I, \psi(t)I }$, we can rearrange \cref{eq:ic-pdps-2nd-ode} as a more compact form
\begin{equation}
	\label{eq:2ndODE-ICPDPS}
	\frac{ 	\Upsilon(t)}{\theta(t)} u''(t)+\left[\begin{pmatrix}
		\gG  I&K^*\\ -K& \rF I
	\end{pmatrix}+\frac{ 	\Upsilon(t)}{\theta(t)} \right]u'(t)  + H(u(t)) = 0.
\end{equation}

As illustrated in the last section, our main idea is combining the first-order difference-like formulation with proper time step size. To do so, we firstly establish the parameter equation \cref{eq:ode-para} by introducing some new integrated sequences in \cref{sec:agg-seq}. After that, in \cref{sec:icpdps-ode} we rewrite \cref{algo:IC-PDPS} carefully into a difference template that leads to the desired model \cref{eq:ode-ic-pdps} and present a Lyapunov analysis in \cref{eq:lyap-res-icpdps-ode}. Following the spirit of the last section, we call \cref{eq:ode-ic-pdps}/\cref{eq:ic-pdps-2nd-ode}/\cref{eq:2ndODE-ICPDPS} the {\it rescaled IC-PDPS ODE} since we use the rescaled time \cref{eq:ti}. 
\subsection{Continuous parameter equation}
\label{sec:agg-seq}
%
Define the following integrated sequences
\begin{equation}\label{eq:aggreg-icpdps}
	\Phi_i = \phi_i\lambda_{i}^2,\quad \Psi_i = \psi_i\lambda_i^2,\quad \Theta_i = \phi_i\tau_i.
\end{equation}
This leads to an equivalent presentation of \cref{eq:para-icpdps-2}:
\begin{equation}
	\label{eq:para-icpdps}
	\frac{\Phi_{i+1}-\Phi_i}{\lambda_i} = 2\gG \Theta_i,\quad
	\frac{\Psi_{i+1}-\Psi_{i}}{\lambda_{i}} = 2\rF\Theta_{i} ,\quad
	\frac{\Theta_{i+1}-\Theta_{i}}{\lambda_{i+1}}=  \Theta_{i+1},
\end{equation}
with the restriction 
\begin{equation}\label{eq:cond-lambdai}
	\lambda_{i}=\alpha  \sqrt{ \Phi_i\Psi_{i}}/ \Theta_{i}\quad \text{for}~i\in\mathbb N,
\end{equation}
where $\alpha\in(0,1/\nm{K}]$.
For later use, we give a useful lemma which provides some technical estimates on the sequence $\{\lambda_i:\,i\in\mathbb N\}$.

\begin{lem}\label{lem:lambdai-icpdps}
	Given $\Theta_0,\Phi_0$ and $\Psi_0>0$
	such that $\Theta_0^2\nm{K}^2=\Phi_0\Psi_0$, 
	the positive sequence $\{\lambda_i:\,i\in\mathbb N\}$ defined by  \cref{eq:para-icpdps,eq:cond-lambdai} satisfies $0<\lambda_{i+1}\leq \lambda_i\leq 1$ and
	\[
	\snm{\lambda_{i+1}-\lambda_i}\leq \left(1+2C_0\right)\lambda_i^2,
	\]
	where
	\begin{equation}\label{eq:c0}
		C_0 := 10+\frac{24\gG\rF}{\nm{K}^2}+\frac{8\gG \Psi_0+8\rF\Phi_0}{\nm{K}^2\Theta_0}.
	\end{equation}
	In particular, we have $  \snm{\lambda_i/\lambda_{i+1}-1}\leq (1+C_0)\lambda_i$.
\end{lem}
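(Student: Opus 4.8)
The plan is to collapse the coupled system \cref{eq:para-icpdps,eq:cond-lambdai} into a single scalar recursion for $\lambda_i$ and then run a simultaneous induction. First I would eliminate the implicit definition of $\Theta_{i+1}$: inserting \cref{eq:cond-lambdai} at index $i+1$ into the third relation of \cref{eq:para-icpdps} gives $\Theta_{i+1}-\Theta_i=\lambda_{i+1}\Theta_{i+1}=\alpha\sqrt{\Phi_{i+1}\Psi_{i+1}}$, hence $\Theta_{i+1}=\Theta_i+\alpha\sqrt{\Phi_{i+1}\Psi_{i+1}}$ and
\[
\lambda_{i+1}=\frac{\rho_i\lambda_i}{1+\rho_i\lambda_i},\qquad \rho_i:=\sqrt{\frac{\Phi_{i+1}\Psi_{i+1}}{\Phi_i\Psi_i}}.
\]
Since $\gG,\rF\ge0$, the first two relations of \cref{eq:para-icpdps} show that $\Phi_i,\Psi_i$ are nondecreasing, so $\rho_i\ge1$; the map $x\mapsto x/(1+x)$ then forces $0<\lambda_{i+1}<1$ for every $i$, while the hypothesis $\Theta_0^2\nm{K}^2=\Phi_0\Psi_0$ gives $\lambda_0=\alpha\nm{K}\le1$. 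This settles positivity and the bound $\lambda_i\le1$.

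Next I would introduce the two ratio sequences $R_i:=\gG\Theta_i/\Phi_i$ and $Q_i:=\rF\Theta_i/\Psi_i$ and record three identities that drive the rest: $\rho_i^2=(1+2\lambda_iR_i)(1+2\lambda_iQ_i)$; the constraint-consequence $R_iQ_i=\gG\rF\alpha^2/\lambda_i^2$; and the conserved quantity $\gG\Psi_i-\rF\Phi_i=\gG\Psi_0-\rF\Phi_0$, obtained by subtracting $\rF$ times the $\Phi$-increment from $\gG$ times the $\Psi$-increment. From \cref{eq:para-icpdps} one then derives the companion recursions $R_{i+1}=R_i/[(1-\lambda_{i+1})(1+2\lambda_iR_i)]$, and likewise for $Q_i$.

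The heart of the argument, and the step I expect to be the main obstacle, is monotonicity. Rewriting the scalar recursion, $\lambda_{i+1}\le\lambda_i$ is equivalent to $\rho_i(1-\lambda_i)\le1$, i.e.\ to $(1+2\lambda_iR_i)(1+2\lambda_iQ_i)(1-\lambda_i)^2\le1$. This is guaranteed once $R_i\le\frac1{2(1-\lambda_i)}$ and $Q_i\le\frac1{2(1-\lambda_i)}$, because each factor is then at most $1/(1-\lambda_i)$. I would prove these two ceilings by induction: the map $f(R)=R/[(1-\lambda_{i+1})(1+2\lambda_iR)]$ is increasing and sends $\frac1{2(1-\lambda_i)}$ to exactly $\frac1{2(1-\lambda_{i+1})}$, so the bound is self-propagating. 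The delicate point is that monotonicity, the $R$-ceiling and the $Q$-ceiling are entangled through $\lambda_{i+1}$, so the induction must advance $\lambda_i$, $R_i$ and $Q_i$ together in one step, with the base case resting on the balance $\Theta_0^2\nm{K}^2=\Phi_0\Psi_0$.

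Finally I would assemble the quantitative estimates. From $\lambda_{i+1}-\lambda_i=[\lambda_i(\rho_i-1)-\rho_i\lambda_i^2]/(1+\rho_i\lambda_i)$ one gets $\snm{\lambda_{i+1}-\lambda_i}\le\lambda_i(\rho_i-1)+\rho_i\lambda_i^2$. Using $\rho_i\ge1$ together with the identities of the second step, $\frac{\rho_i-1}{\lambda_i}\le\frac{\rho_i^2-1}{2\lambda_i}=R_i+Q_i+2\lambda_iR_iQ_i$; feeding in the uniform control of $R_i,Q_i$ (tracked in terms of the initial ratios $R_0=\gG\Psi_0/(\nm{K}^2\Theta_0)$ and $Q_0=\rF\Phi_0/(\nm{K}^2\Theta_0)$, with $R_0Q_0=\gG\rF/\nm{K}^2$) turns this into $\rho_i-1\le C_0\lambda_i$ with $C_0$ exactly as in \cref{eq:c0}, the factors $8$, $24$ and the additive $10$ being the bookkeeping constants of these estimates. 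Then $\rho_i\le1+C_0$ and $\lambda_i\le1$ give $\snm{\lambda_{i+1}-\lambda_i}\le C_0\lambda_i^2+(1+C_0)\lambda_i^2=(1+2C_0)\lambda_i^2$. For the last assertion I would use $\lambda_i/\lambda_{i+1}=\rho_i^{-1}+\lambda_i$, so that $\lambda_i/\lambda_{i+1}-1=\lambda_i-(\rho_i-1)/\rho_i$, and since $0\le(\rho_i-1)/\rho_i\le\rho_i-1\le C_0\lambda_i$ the bound $\snm{\lambda_i/\lambda_{i+1}-1}\le(1+C_0)\lambda_i$ follows.
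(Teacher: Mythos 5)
Your scalar recursion is exactly the paper's: your $\rho_i$ is the paper's $\sqrt{A_i}$, and the identity $\lambda_{i+1}=\rho_i\lambda_i(1-\lambda_{i+1})$, hence $\lambda_{i+1}=\rho_i\lambda_i/(1+\rho_i\lambda_i)$, is \cref{eq:mid}. The final assembly (bound $\rho_i-1$ by $C_0\lambda_i$, then read off both displayed inequalities) also mirrors the paper, which works with $\snm{\sqrt{A_i}-1}\leq A_i-1\leq C_0\lambda_i$. Where you genuinely diverge is monotonicity: the paper telescopes the first two relations of \cref{eq:para-icpdps} into $\Phi_{i+1}=\Phi_0+2\gG\Theta_i$ and $\Psi_{i+1}=\Psi_0+2\rF\Theta_i$, writes $\lambda_{i+1}=1/(1+S_i)$ with $S_i=\nm{K}\Theta_i/\sqrt{(\Phi_0+2\gG\Theta_i)(\Psi_0+2\rF\Theta_i)}$ an increasing function of the increasing $\Theta_i$, and is done; your simultaneous induction with the self-propagating ceilings $R_i,Q_i\leq 1/(2(1-\lambda_i))$ is correct (the fixed-point computation $f(1/(2(1-\lambda_i)))=1/(2(1-\lambda_{i+1}))$ checks out) but is a longer road to the same place. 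Note that both arguments need $\lambda_0=1$ for the step $\lambda_1\leq\lambda_0$: your base case $R_0\leq 1/(2(1-\lambda_0))$ is only automatic when $\lambda_0=1$, i.e.\ $\alpha=1/\nm{K}$, which is the reading the paper adopts.

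The genuine gap is the estimate $\rho_i-1\leq C_0\lambda_i$, equivalently $R_i+Q_i+2\lambda_iR_iQ_i\leq C_0$, which you dismiss as ``bookkeeping'' but which occupies an entire appendix of the paper (\cref{app:pf-key-est}). Two things must actually be proved there. First, $2\lambda_iR_iQ_i=2\gG\rF\alpha^2/\lambda_i$ requires a positive lower bound on $\lambda_i$; the paper derives $\lambda_i\geq 2\sqrt{\gG\rF}/(\nm{K}+2\sqrt{\gG\rF})$ from \cref{eq:mid,eq:Phi-Psi}. (This is recoverable inside your framework: $R_iQ_i=\gG\rF\alpha^2/\lambda_i^2$ together with your ceilings forces $\lambda_i/(1-\lambda_i)\geq 2\alpha\sqrt{\gG\rF}$, but you never say so.) Second, $R_i+Q_i$ must be bounded uniformly in $i$; your ceilings give only $1/(1-\lambda_i)$, which the paper controls through a \emph{second}, different lower bound $\lambda_i\gtrsim\sqrt{(\gG\Psi_0+\rF\Phi_0)/\Theta_{i-1}}/\nm{K}$ leading to the term $(8\gG\Psi_0+8\rF\Phi_0)/(\nm{K}^2\Theta_0)$ in \cref{eq:c0}. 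Without carrying out these two lower bounds, the claim that the constants land ``exactly'' on \cref{eq:c0} is an assertion, not a proof, and it is precisely the part of the lemma that is not routine.
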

\begin{proof} 
	Since $\nm{K}^2\Theta_0^2=\Phi_0\Psi_0$, we have $\lambda_0=1$.
	By \cref{eq:para-icpdps,eq:cond-lambdai}, it follows that
	\begin{align}
		\lambda_{i+1}
		={}&\frac{\sqrt{\Phi_{i+1}\Psi_{i+1}}}{\nm{K}\Theta_{i+1}}= 
		\frac{\sqrt{\Phi_{i+1}\Psi_{i+1}}}{\nm{K}\Theta_{i}}(1-\lambda_{i+1})
		\label{eq:mid}
		\\
		= {}&\frac{1-\lambda_{i+1}}{\nm{K}\Theta_i}\sqrt{(\Phi_i+2\gG \Theta_i\lambda_i)(\Psi_i+ 2\rF\Theta_{i}\lambda_i)}
		={}\sqrt{A_i}\lambda_i(1-\lambda_{i+1}),\notag
	\end{align}
	where 
	\begin{equation}\label{eq:Ai}
		A_i=  1 +\frac{4\gG \rF}{\nm{K}^2}+  \frac{2\gG \Psi_i+2\rF\Phi_i}{\nm{K}\sqrt{\Phi_i\Psi_i}}.
	\end{equation}
	Therefore, given $\Phi_i,\Psi_i,\Theta_i,\lambda_i>0$, $	\lambda_{i+1}	={}  \lambda_i\sqrt{A_i}/(1+\lambda_i\sqrt{A_i})\in(0,1]$ is well determined. 	If we can prove 
	\begin{equation}\label{eq:pf-key-est}
		0\leq  	A_i- 1\leq C_0\lambda_i,
	\end{equation}
	where $C_0>0$ is defined by \cref{eq:c0}, then it follows that 
	\[
	\snm{\lambda_{i+1}-\lambda_i }= 
	\lambda_i\snm{	\frac{1+\sqrt{A_i}(\lambda_i-1)}{1+\lambda_i\sqrt{A_i}}} \leq  \lambda_i\snm{\sqrt{A_i}-1}+\sqrt{A_i}\lambda_i^2\leq \left(C_0+\sqrt{1+C_0}\right)\lambda_{i}^2,
	\]
	and
	\[
	\snm{\lambda_i/\lambda_{i+1}-1} =\snm{\frac{1-\sqrt{A_i}}{\sqrt{A_i}}+\lambda_i}\leq \snm{A_i-1}+\lambda_i\leq (1+C_0)\lambda_i,
	\]
	which implies the desired results immediately. We complete the proof of this lemma and refer to \cref{app:pf-key-est} for a rigorous justification of the key estimate \cref{eq:pf-key-est} and the mononticity property $\lambda_{i+1}\leq \lambda_i$. 
\end{proof}

Now, we define the ansatzes $\phi(t_i) = \Phi_i,\,\psi(t_i) = \Psi_{i},\, \theta(t_{i}) = \Theta_{i}$ with the rescaled time 
\begin{equation}\label{eq:ti}
	t_i = \sum_{j=0}^{i-1}\lambda_j,
\end{equation} 
which means the time step size is $\Delta_i=\lambda_i$. By \cref{eq:para-icpdps,lem:lambdai-icpdps}, applying standard Taylor expansion gives 
\[  	 
\phi'(t_i)={} 2\gG\theta(t_i)+O(\lambda_i),\quad
\psi'(t_i)={} 2\rF\theta(t_i) + O(\lambda_i),\quad
\theta'(t_i)={} \theta(t_i) + O(\lambda_i).  
\]
Letting the time $t_i$ be fixed and taking the limit $\lambda_{i}\to0$, we obtain the continuous parameter equation \cref{eq:ode-para}.
\subsection{The low-resolution ODE}
\label{sec:icpdps-ode}
Introduce the ansatzes 
\[
x(t_i) = x^i,\,y(t_i) = y^{i},\,\zeta(t_i) = \zeta^i,\,\eta(t_i) = \eta^{i}.
\]
Recall the auxiliary sequence $z^i=(\zeta^i,\eta^i)$ defined by \cref{eq:zetai-etai}, which implies that ($\lambda_{i+1}=\mu_{i+1}$)
\begin{equation}\label{eq:diff-x-y}
	\frac{x^{i+1}-x^i}{\lambda_{i}} = {}\zeta^{i+1}-x^i,\quad
	\frac{y^{i+1}-y^i}{\lambda_{i+1}} = {}\eta^{i+1}-y^i. 
\end{equation}
This also refers to lines \ref{algo:zeta} and \ref{algo:eta} in \cref{algo:IC-PDPS}. By \cref{eq:diff-x-y,lem:lambdai-icpdps}, we have
\begin{equation}\label{eq:diff-X-Y}
	x'(t_i) = {}  \zeta(t_{i})- x(t_i) + O(\lambda_i), \quad
	y'(t_{i}) = {} \eta(t_i)-y(t_i)+O(\lambda_i).
\end{equation}

On the other hand, expand the proximal steps in lines \ref{algo:x} and \ref{algo:y} into inclusions gives 
\begin{equation}\label{eq:diff-zeta-eta}
	\left\{
	\begin{aligned}	
		{}& 		\Phi_i   \frac{\zeta^{i+1}-\zeta^i}{\lambda_i} =\Theta_{i}\left[\gG ( x^{i+1}-\zeta^{i+1})-\nabla G(x^{i+1})-K^*\eta^i\right],\\
		{}&	\Psi_{i+1} \frac{\eta^{i+1}-\eta^i}{\lambda_{i+1}} = \Theta_{i+1}\left[\rF( y^{i+1}-\eta^{i+1})-\nabla F^*(y^{i+1})+K\bar \zeta^{i+1}\right],
	\end{aligned}
	\right.
\end{equation}
where $\bar \zeta^{i+1}=\zeta^{i+1}+\omega_i(\zeta^{i+1}-\zeta^i)$ and $\omega_i = \lambda_i/\lambda_{i+1}-\lambda_i$.
For a detailed proof of \cref{eq:diff-zeta-eta}, we refer to \cref{app:pf-diff-icpdps}. Thanks to \cref{lem:lambdai-icpdps}, it holds that
\[
\snm{\omega_i-1} = \snm{\frac{\lambda_i}{\lambda_{i+1}}-1-\lambda_i}\leq (2+C_0)\lambda_i,
\]
where $C_0>0$ is defined by \cref{eq:c0} and is independent on $\lambda_i$. This means $\bar \zeta^{i+1}=\zeta^{i+1}+\omega_i(\zeta^{i+1}-\zeta^i)=\zeta(t_i)+O(\lambda_i)$.
Hence, by introducing the ansatzes $\zeta(t_i) = \zeta^i$ and $\eta(t_i) = \eta^{i}$, we obtain from \cref{eq:diff-zeta-eta} and standard Taylor expansion that 
\begin{equation}\label{eq:diff-Zeta-Eta}
	\left\{
	\begin{aligned} 
		{}&\frac{\phi(t_i)}{ \theta(t_{i})}\cdot  \zeta'(t_i)= \gG ( x(t_{i})-\zeta(t_{i}))-\nabla G(x(t_{i}))-K^*\eta(t_{i})	  +O(\lambda_i),\\		
		{}&\frac{\psi(t_i)}{\theta(t_{i})}\cdot \eta'(t_i) = \rF( y(t_{i})-\eta(t_{i}))-\nabla F^*(y(t_{i}))+K\zeta(t_{i}) 	  +O(\lambda_i).
	\end{aligned}
	\right.
\end{equation}
In view of \cref{eq:diff-X-Y,eq:diff-Zeta-Eta}, fixing the time $t_i$ and taking the limit $\lambda_{i}\to0$, we get  the desired low-resolution ODE \cref{eq:ode-ic-pdps}.
\begin{rem}
	Note that \cref{eq:diff-x-y,eq:diff-zeta-eta,eq:para-icpdps} gives a closed iteration scheme with respect to the sequences $\{x^i,y^i,\zeta^i,\eta^i:\,i\in\mathbb N\}$ and the parameters $\{\Phi_i,\Psi_i,\Theta_{i}:\,i\in\mathbb N\}$.
	As a byproduct, this claims that IC-PDPS actually corresponds to a semi-implicit Euler discretization of the continuous model \cref{eq:ode-ic-pdps} and the parameter equation \cref{eq:ode-para}.
\end{rem}

		\subsection{Lyapunov analysis}
		\label{eq:lyap-res-icpdps-ode}
		In this part, we present a Lyapunov analysis of the continuous model \cref{eq:ode-ic-pdps}. Recalling the alternate Lagrangian defined by \cref{eq:alter-L}, we introduce the following Lyapunov function 
		\begin{equation}\label{eq:lyap}
			\mathcal E(t): = \theta(t)\left[\widehat{\mathcal{L}}(x(t),\widehat{y})-\widehat{\mathcal{L}}(\widehat{x},y(t))\right] + \frac{1}{2}\nm{z(t)-\widehat{u}}^2_{\Upsilon(t)},
		\end{equation}
		where $z(t)=(\zeta(t),\eta(t))=(x(t)+x'(t),y(t)+y'(t))$ and $	\Upsilon(t) =  \diag{ 
			\phi(t)I, \psi(t)I }$. According to the parameter equation \cref{eq:ode-para}, it is easy to obtain
		\[
		\phi(t)=
		\phi(0)+2\gG\theta(0)(e^t-1),\quad\psi(t)=\psi(0)+2\rF\theta(0)(e^t-1),\quad \theta(t)=\theta(0)e^t.
		\]
		The main result is given below, which together with the above fact implies the convergence rate of the Lyapunov function along the continuous trajectory.
		\begin{thm}\label{thm:conv-lyap}
			Let $(x(t),y(t),\zeta(t),\eta(t))$ be the smooth solution to \cref{eq:ode-ic-pdps}.
			The Lyapunov function \cref{eq:lyap} satisfies
			\begin{equation}\label{eq:diff-lyap}
				\frac{\dd}{\dd t}\mathcal E(t)\leq 0,
			\end{equation}
			which implies that 
			\[
			\theta(t)\left[\widehat{\mathcal{L}}(x(t),\widehat{y})-\widehat{\mathcal{L}}(\widehat{x},y(t))\right] + \frac{1}{2}\nm{z(t)-\widehat{u}}^2_{\Upsilon(t)}\leq \mathcal E(0)\quad\forall\,t\geq 0.
			\]
		\end{thm}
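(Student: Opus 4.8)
The plan is to differentiate $\mathcal E(t)$ directly and show that the result factors as $\theta(t)$ times a manifestly nonpositive quantity. Write $\mathcal E(t)=\theta(t)D(t)+Q(t)$, where $D(t):=\widehat{\mathcal{L}}(x(t),\widehat{y})-\widehat{\mathcal{L}}(\widehat{x},y(t))$ is the alternate Lagrangian gap and $Q(t):=\frac12\nm{z(t)-\widehat{u}}^2_{\Upsilon(t)}=\frac{\phi}{2}\nm{\zeta-\widehat{x}}^2+\frac{\psi}{2}\nm{\eta-\widehat{y}}^2$. Expanding the definition \cref{eq:alter-L} of $\widehat{\mathcal{L}}$, I would first record the splitting $D=D_P+D_D+D_K$ with primal part $D_P=G(x)-G(\widehat{x})-\frac{\gG}{2}\nm{x-\widehat{x}}^2$, dual part $D_D=F^*(y)-F^*(\widehat{y})-\frac{\rF}{2}\nm{y-\widehat{y}}^2$, and skew-symmetric coupling $D_K=\dual{Kx,\widehat{y}}-\dual{K\widehat{x},y}$. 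The negative quadratic terms inside $D_P$ and $D_D$ are precisely what the shifted Lagrangian supplies, and they will later be absorbed by strong convexity.

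Next I would compute $\frac{\dd}{\dd t}\mathcal E$. Using $\theta'=\theta$ from \cref{eq:ode-para} gives $\frac{\dd}{\dd t}(\theta D)=\theta D+\theta D'$, where $D'=\dual{\nabla G(x)-\gG(x-\widehat{x}),x'}+\dual{\nabla F^*(y)-\rF(y-\widehat{y}),y'}+\dual{Kx',\widehat{y}}-\dual{K\widehat{x},y'}$. For the quadratic part, $\phi'=2\gG\theta$ and $\psi'=2\rF\theta$ yield $Q'=\gG\theta\nm{\zeta-\widehat{x}}^2+\rF\theta\nm{\eta-\widehat{y}}^2+\dual{\zeta-\widehat{x},\phi\zeta'}+\dual{\eta-\widehat{y},\psi\eta'}$, into which I substitute the last two lines of \cref{eq:ode-ic-pdps} for $\phi\zeta'$ and $\psi\eta'$. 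After also using $x'=\zeta-x$ and $y'=\eta-y$ from the first two lines, every surviving term carries the common positive factor $\theta(t)$, so the claim reduces to showing that the remaining bracket is $\le 0$.

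The decisive step is the cancellation of all terms involving $K$. Collecting the $K$-contributions from $D_K$, from the coupling part of $D'$, and from the substitutions $-K^*\eta$ (primal) and $+K\zeta$ (dual), I expect them to cancel in opposite-signed pairs by the skew symmetry of $\begin{pmatrix}0&K^*\\-K&0\end{pmatrix}$: every monomial of the form $\dual{K\,\cdot\,,\cdot}$ appears exactly twice with opposite signs. Once these are gone, the primal remainder regroups, by elementary algebra with $a=\zeta-\widehat{x}$ and $b=x-\widehat{x}$, into $D_P-\dual{\nabla G(x),x-\widehat{x}}+\gG\nm{x-\widehat{x}}^2=G(x)-G(\widehat{x})-\dual{\nabla G(x),x-\widehat{x}}+\frac{\gG}{2}\nm{x-\widehat{x}}^2$, which is exactly the negated $\gG$-strong convexity inequality for $G$ and hence $\le 0$. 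The dual remainder collapses identically to the $\rF$-strong convexity remainder of $F^*$, also $\le 0$. Summing gives $\frac{\dd}{\dd t}\mathcal E(t)=\theta(t)\cdot(\text{nonpositive})\le 0$, and integrating from $0$ to $t$ delivers the stated bound.

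The main obstacle is purely the bookkeeping of the cross terms: one must track all the inner products of the form $\dual{K\,\cdot\,,\cdot}$ and verify they annihilate pairwise, which is where the skew-symmetric velocity correction $\Gamma u'$ of \cref{eq:2ndODE-ICPDPS} earns its keep. A secondary point of care is to work with the alternate Lagrangian $\widehat{\mathcal{L}}$ rather than $\mathcal L$: the extra $-\frac{\gG}{2}\nm{x-\widehat{x}}^2$ and $-\frac{\rF}{2}\nm{y-\widehat{y}}^2$ it carries must combine with the $\gG\nm{\zeta-\widehat{x}}^2$ and $\rF\nm{\eta-\widehat{y}}^2$ contributions produced by $\phi'$ and $\psi'$ so that precisely the strong-convexity coefficient $\frac{\gG}{2}$ (resp. $\frac{\rF}{2}$) survives, leaving no slack to spare.
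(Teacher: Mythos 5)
Your proposal is correct and follows essentially the same route as the paper's proof: differentiate $\mathcal E$, substitute the first-order system for $(x',y',\zeta',\eta')$, kill the $K$-cross terms by skew-symmetry, cancel the quadratic terms against $\phi'=2\gG\theta$, $\psi'=2\rF\theta$, and bound the remainder by the ($\gG$-, $\rF$-strong) convexity of $G$ and $F^*$. The paper merely packages the same computation in operator form, writing the system as $u'=z-u$, $\Upsilon z'=\theta[\Gamma(u-z)-H(u)]$ and grouping the terms into a convexity term $\mathbb I_1$ and a quadratic term $\mathbb I_2=\frac12\nm{z-\widehat u}^2_{\Upsilon'-2D\theta}$, whereas you carry out the identical cancellations component-wise.
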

		\begin{proof}
			It is sufficient to establish \cref{eq:diff-lyap}. 
			By the definition \cref{eq:alter-L}, we have
			\[
			\widehat{\mathcal{L}}(x(t),\widehat{y})-\widehat{\mathcal{L}}(\widehat{x},y(t))=	{}\mathcal{L}(x(t),\widehat{y})-\mathcal{L}(\widehat{x},y(t))- \frac{\gG}{2}\nm{x(t)-\widehat{x}}-\frac{\rF}{2}\nm{y(t)-\widehat{y}},
			\]
			which implies  
			\begin{equation}\label{eq:grad-hat-L}
				\nabla\left[\widehat{\mathcal{L}}(x(t),\widehat{y})-\widehat{\mathcal{L}}(\widehat{x},y(t))\right]=  H(u(t))
				-\Gamma(u(t)-\widehat{u}) . 
			\end{equation}
			Therefore, it follows that
			\[
			\begin{aligned}
				\mathcal E'(t) ={}& \theta'(t)\left[\widehat{\mathcal{L}}(x(t),\widehat{y})-\widehat{\mathcal{L}}(\widehat{x},y(t))\right]  + \theta(t)\dual{u'(t),H(u(t))
					-\Gamma(u(t)-\widehat{u})} \\
				{}&\quad + \frac{1}{2}\nm{z(t)-\widehat{u}}^2_{\Upsilon'(t)}
				+\dual{z'(t),z(t)-\widehat{u}}_{\Upsilon(t)}.
			\end{aligned}
			\]
			Rewrite \cref{eq:2ndODE-ICPDPS} equivalently as follows
			\begin{equation}
				\label{eq:ODE-ICPDPS}
				\left\{
				\begin{aligned}
					{}&u'(t)=  z(t)-u(t),\\
					{}&	\Upsilon(t) z'(t) = \theta(t)\left[\Gamma (u(t)-z(t)) - H(u(t))\right],
				\end{aligned}
				\right.
			\end{equation}
			which yields 
			\[
			\begin{aligned}
				{}& \frac{1}{2}\nm{z(t)-\widehat{u}}^2_{\Upsilon'(t)}
				+\dual{z'(t),z(t)-\widehat{u}}_{\Upsilon(t)}\\ 
				={}&\frac{1}{2}\nm{z(t)-\widehat{u}}^2_{\Upsilon'(t)}
				+\theta(t)\dual{		\Gamma(u(t)-z(t)) - H(u(t)),z(t)-\widehat{u}}.
			\end{aligned}
			\]
			Hence, we obtain 
			\[
			\begin{aligned}
				\mathcal E'(t) ={}& \theta'(t)\left[\widehat{\mathcal{L}}(x(t),\widehat{y})-\widehat{\mathcal{L}}(\widehat{x},y(t))\right]  +\theta(t) \dual{z(t)-u(t),H(u(t))
					-\Gamma(u(t)-\widehat{u})} \\
				{}&\quad +\frac{1}{2}\nm{z(t)-\widehat{u}}^2_{\Upsilon'(t)}
				+\theta(t)\dual{		\Gamma(u(t)-z(t)) - H(u(t)),z(t)-\widehat{u}}\\
				={}& \theta'(t)\left[\widehat{\mathcal{L}}(x(t),\widehat{y})-\widehat{\mathcal{L}}(\widehat{x},y(t))\right]  +\theta(t)	\underbrace{\dual{\widehat{u}-u(t),H(u(t))
						-\Gamma(u(t)-\widehat{u})}}_{\mathbb I_1}\\
				{}&\quad+\underbrace{\frac{1}{2}\nm{z(t)-\widehat{u}}^2_{\Upsilon'(t)}-\theta(t) \dual{\Gamma(z(t)-\widehat{u}),z(t)-\widehat{u}}}_{\mathbb I_2}.
			\end{aligned}
			\]
			
			Let us focus on the two cross terms $\mathbb I_1$ and $\mathbb I_2$. Set $D:=\diag{ 
				\gG I, \rF I }$. Then 
			\[
			\Gamma = D  + \begin{bmatrix}
				O&K^*\\-K&O
			\end{bmatrix}.
			\]
			Since the second part is asymmetric, it follows that
			\[
			\mathbb I_2=	\frac{1}{2}\nm{z(t)-\widehat{u}}^2_{\Upsilon'(t)}-\theta(t) \dual{\Gamma(z(t)-\widehat{u}),z(t)-\widehat{u}} = 	\frac{1}{2}\nm{z(t)-\widehat{u}}^2_{\Upsilon'(t)-2D\theta(t)}.
			\]
			Besides, in view of \cref{eq:grad-hat-L}, we get 
			\[
			\begin{aligned}
				\mathbb I_1
				=	{}&	 \dual{\widehat{u}-u(t),\nabla \left[\widehat{\mathcal{L}}(x(t),\widehat{y})-\widehat{\mathcal{L}}(\widehat{x},y(t))\right]}\\ 
				={}&\dual{\widehat{x}-x(t),\nabla_x \widehat{\mathcal{L}}(x(t),\widehat{y})  }
				-\dual{\widehat{y}-y(t),\nabla_y \widehat{\mathcal{L}}(\widehat{x},y(t))  }\\
				\leq {}&\widehat{\mathcal{L}}(\widehat{x},\widehat{y})-\widehat{\mathcal{L}}(x(t),\widehat{y})
				+\widehat{\mathcal{L}}(\widehat{x},y(t))-	\widehat{\mathcal{L}}(\widehat{x},\widehat{y}) \\
				={}& \widehat{\mathcal{L}}(\widehat{x},y(t)) -\widehat{\mathcal{L}}(x(t),\widehat{y}).
			\end{aligned}
			\]
			Combining this with the last identity for $\mathbb I_2$ gives 
			\begin{equation}\label{eq:key-conv-lyap}
				\mathcal E'(t) 
				\leq  \left(\theta'(t)-\theta(t)\right)\left[\widehat{\mathcal{L}}(x(t),\widehat{y})-\widehat{\mathcal{L}}(\widehat{x},y(t)) \right]+	\frac{1}{2}\nm{z(t)-\widehat{u}}^2_{\Upsilon'(t)-2D\theta(t)}.
			\end{equation}
			Thanks to \cref{eq:ode-para}, we have $\Upsilon'(t)=2D\theta(t)$ and the right hand side vanishes. Consequently, we obtain \cref{eq:diff-lyap} and complete the proof of this theorem.
		\end{proof}
		\begin{rem}
			According to the key estimate \cref{eq:key-conv-lyap}, we claim that the parameter equation \cref{eq:ode-para} can be relaxed as inequalities
			\begin{equation}\label{eq:ode-para-ineq}	    
				\phi'(t)\leq 2\gG\theta(t),\quad
				\psi'(t)\leq2\rF\theta(t),\quad
				\theta'(t)\leq \theta(t), 
			\end{equation}
			which preserves the nonincreasing property \cref{eq:diff-lyap}.
		\end{rem}

\section{The Intrinsic IC-PDPS ODE}
\label{sec:int-ode-icpdps}
In the last section, we derived the continuous model of \cref{algo:IC-PDPS} with respect to the rescaled step size $\lambda_i$. In this part, we shall present another model by using the intrinsic step size $\alpha\in(0,1/\nm{K}]$. The resulted ODE \cref{eq:int-ode-ic-pdps} is equivalent to the previous one \cref{eq:ode-ic-pdps} under proper time transformation.
\subsection{Deriving the continuous model}
Motivated by \cref{eq:cond-lambdai}, we consider the intrinsic step size $\Delta_i=\alpha\in(0,1/\nm{K}]$.
Then \cref{eq:para-icpdps} becomes 
\begin{equation}\label{eq:int-para}
	\small
	\frac{\Phi_{i+1}-\Phi_i}{\alpha } = 2\gG \sqrt{\Phi_i\Psi_i},\quad
	\frac{\Psi_{i+1}-\Psi_{i}}{\alpha } = 2\rF\sqrt{\Phi_i\Psi_i} ,\quad
	\frac{\Theta_{i+1}-\Theta_{i}}{\alpha }= \sqrt{\Phi_{i+1}\Psi_{i+1}}.
\end{equation}
The iterative sequences \cref{eq:diff-x-y} reads equivalently as 
\begin{equation}\label{eq:diff-x-y-int}
	\frac{x^{i+1}-x^i}{\alpha } = {}\frac{\sqrt{\Phi_i\Psi_i}}{\Theta_i}\left(\zeta^{i+1}-x^i\right),\quad
	\frac{y^{i+1}-y^i}{\alpha } = {}\frac{\sqrt{\Phi_{i+1}\Psi_{i+1}}}{\Theta_{i+1}}\left(\eta^{i+1}-y^i\right),
\end{equation}
and \cref{eq:diff-zeta-eta} becomes 
\begin{equation}\label{eq:diff-zeta-eta-int}
	\left\{
	\begin{aligned}	
		{}& 	\sqrt{	 \Phi_i   } \frac{\zeta^{i+1}-\zeta^i}{\alpha} = \sqrt{\Psi_i}\left[\gG ( x^{i+1}-\zeta^{i+1})-\nabla G(x^{i+1})-K^*\eta^i\right] ,\\
		{}&	\sqrt{ \Psi_{i+1}} \frac{\eta^{i+1}-\eta^i}{\alpha } = \sqrt{\Phi_{i+1} } \left[ \rF( y^{i+1}-\eta^{i+1})-\nabla F^*(y^{i+1})+K\bar \zeta^{i+1}\right].
	\end{aligned}
	\right.
\end{equation}

Define the intrinsic time $s_i = \alpha  i$ and the ansatzs
\[
\widetilde{\phi}(s_i) = \Phi_i,\,\widetilde{\psi}(s_i) = \Psi_i,\,\widetilde{\theta}(s_i) = \Theta_i.
\]
Taking the limit $\alpha \to 0$ in \cref{eq:int-para} leads to
\begin{equation}\label{eq:res-ode-para} 
	{}\widetilde{\phi}'(s)= 2\gG\sqrt{\widetilde{\phi}(s)\widetilde{\psi}(s)},\quad
	{} \widetilde{\psi}'(s)=2\rF\sqrt{\widetilde{\phi}(s)\widetilde{\psi}(s)},\quad
	{} \widetilde{\theta}'(s)= \sqrt{\widetilde{\phi}(s)\widetilde{\psi}(s)}.
\end{equation}
Similarly, by introducing the ansatzs
\[
\widetilde{x}(s_i) = x^i,\quad \widetilde{y}(s_i) = y^i,
\quad\widetilde{\zeta}(s_i) = \zeta^i,\quad \widetilde{\eta}(s_i) = \eta^i,
\]
we obtain from \cref{eq:diff-x-y-int,eq:diff-zeta-eta-int} that (letting $\alpha\to 0$)
\begin{equation}\label{eq:int-ode-ic-pdps}
	\small
	\left\{
	\begin{aligned}
		{}&\widetilde{\theta}(s)\widetilde{x}'(s)= \widetilde{\theta}'(s)\left[ \widetilde{\zeta}(s)-\widetilde{x}(s)\right],\\
		{}& \widetilde{\theta}(s) \widetilde{y}'(s)=  \widetilde{\theta}'(s)\left[\widetilde{\eta}(s)-\widetilde{y}(s)\right],\\ 
		{}&	\sqrt{ \widetilde{\phi}(s)}  \widetilde{\zeta}'(s) =  \sqrt{\widetilde{\psi}(s)}\left[\gG(\widetilde{x}(s)-\widetilde{\zeta}(s))-\nabla G(\widetilde{x}(s))-K^*\widetilde{\eta}(s) \right],\\
		{}&	\sqrt{ \widetilde{\psi}(s)}  \widetilde{\eta}'(s) =  \sqrt{\widetilde{\phi}(s)}\left[\rF(\widetilde{y}(s)-\widetilde{\eta}(s))-\nabla F^*(\widetilde{y}(s))+K\widetilde{\zeta}(s) \right].
	\end{aligned}
	\right.
\end{equation}
We call this the {\it intrinsic IC-PDPS ODE} since it is derived from the intrinsic step size $\Delta_i=\alpha$.

Note that \cref{eq:int-ode-ic-pdps,eq:res-ode-para} are equivalent to \cref{eq:ode-ic-pdps,eq:ode-para} respectively, under the rescaling transformation
\begin{equation}\label{eq:time-t-ic-pdps}
	t(s) = \ln \widetilde{\theta}(s) - \ln \widetilde{\theta}(0),
\end{equation}
which means
\begin{equation}\label{eq:int-res-para}
	{}\widetilde{\phi} (s)= \phi(t(s)) ,\quad
	{} \widetilde{\psi}(s)=  \psi (t(s)),\quad
	{} \widetilde{\theta}(s)=   \theta (t(s)),
\end{equation}
and 
\begin{equation}\label{eq:int-res-X-Y}
	{}\widetilde{x} (s)= x(t(s)) ,\quad
	{} \widetilde{y}(s)=  y (t(s)),\quad
	{} \widetilde{\zeta}(s)=   \zeta (t(s)),\quad 
	{} \widetilde{\eta}(s)=   \eta (t(s)).
\end{equation}
For example, let us verify $ \widetilde{\theta}(s)=\theta(t(s)) $ for all $s\geq 0$. By definition, it is clear that $t(0)= 0$ and $ \widetilde{\theta}(0)=\theta_0=\theta(0)=\theta(t(0)) $. Moreover, we have
\[
\frac{\dd}{\dd s} \theta(t(s)) =   \theta'(t(s))\frac{\dd}{\dd s}  t(s)\overset{\text{by \cref{eq:time-t-ic-pdps}}}{=}\frac{\widetilde{\theta}'(s)}{\widetilde{\theta}(s)}\theta'(t(s))
\overset{\text{by \cref{eq:ode-para}}}{=} \theta(t(s))\frac{\widetilde{\theta}'(s)}{\widetilde{\theta}(s)}.
\]
Multiplying both sides by $1/\theta(t(s))$ and integrating from 0 to $s$, we get
\[
\ln \theta(t(s))-\ln \theta(t(0)) = \ln \widetilde{\theta}(s)-\ln \widetilde{\theta}(0)\quad\Longrightarrow  \quad \widetilde{\theta}(s)=\theta(t(s)) .
\]
The left relations in \cref{eq:int-res-para,eq:int-res-X-Y} can be proved similarly.
\begin{figure}[H]
	\centering
	\subfigure{
		\includegraphics[width=5cm]{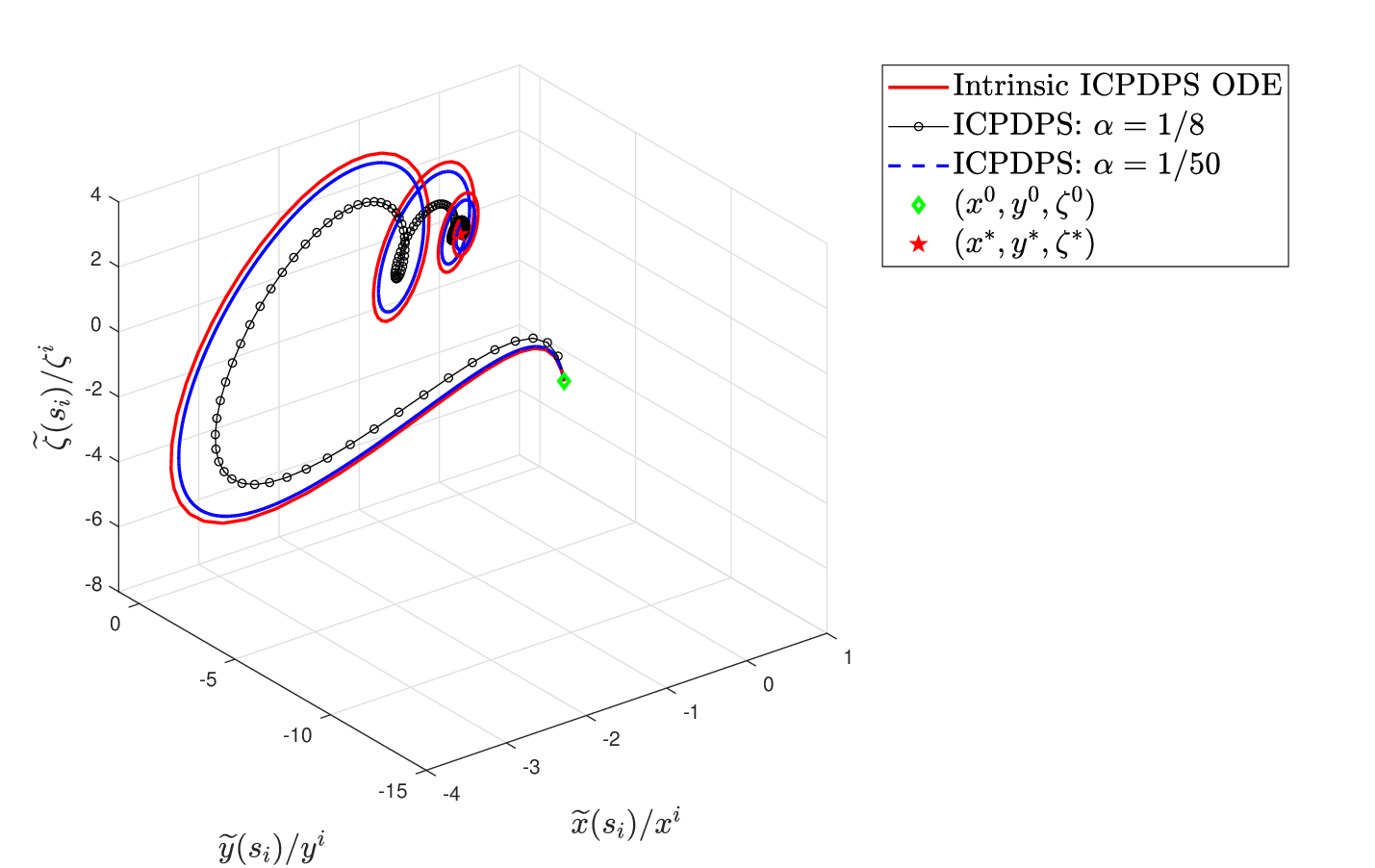}
		\label{fig:icpdps-intode-xyzeta}
	}
	\subfigure{
		\includegraphics[width=5cm]{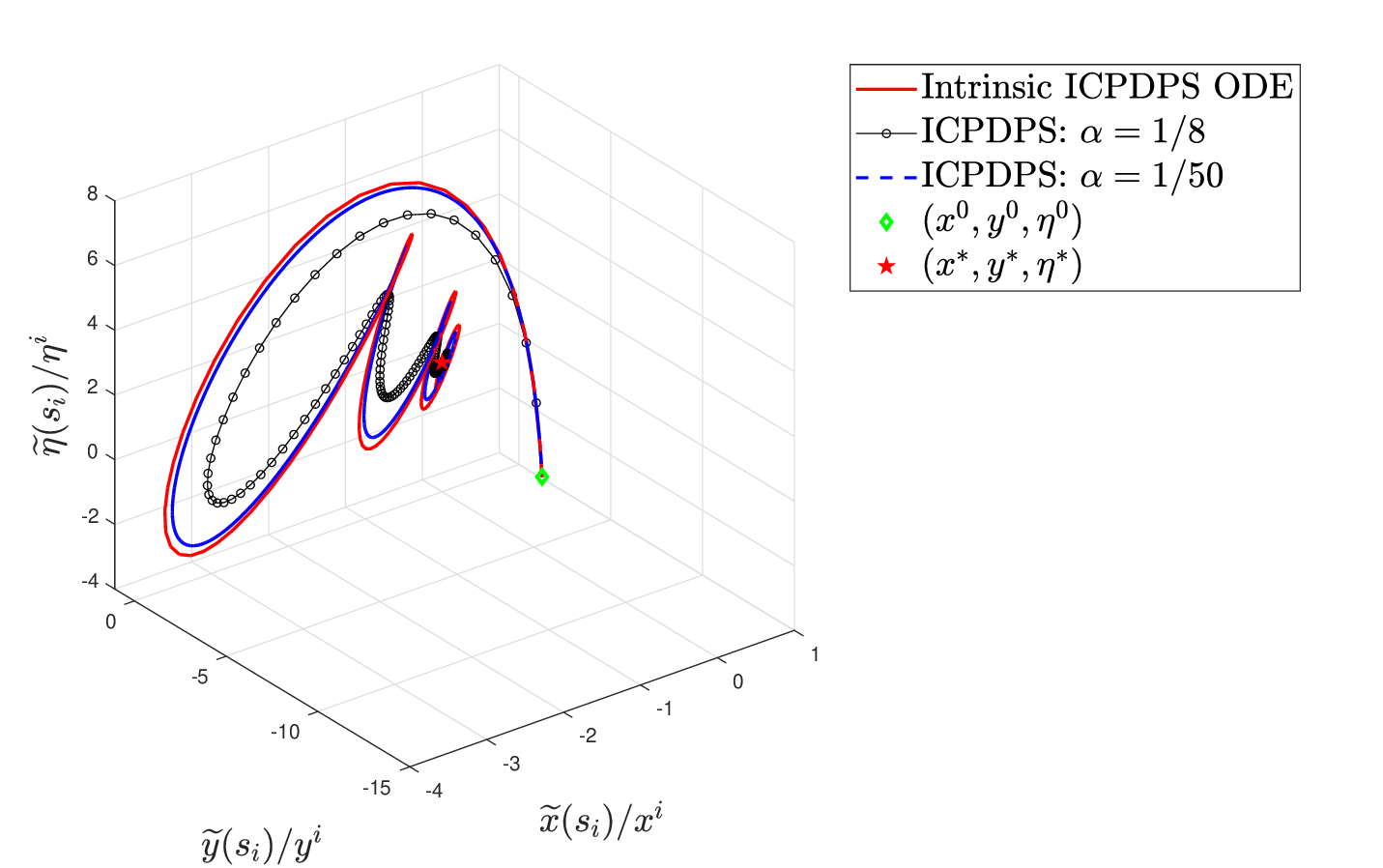}
		\label{fig:icpdps-intode-xyeta}
	}
	\quad    
	\subfigure{
		\includegraphics[width=4cm]{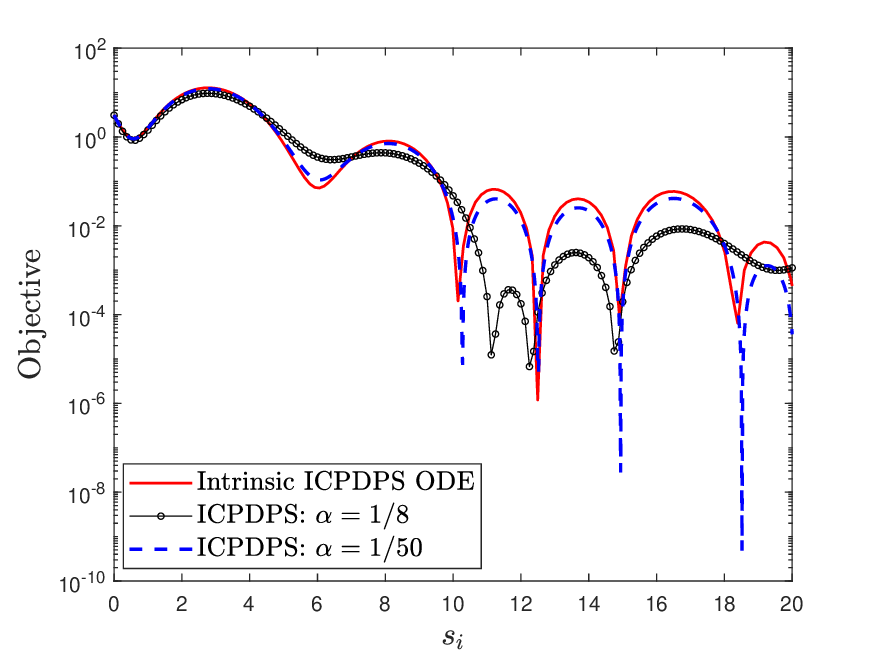}
		\label{fig:icpdps-intode-rate}
	}
	\caption{Numerical illustration of the trajectories $X^i=(x^i,y^i,\zeta^i,\eta^i)$ for  \cref{algo:IC-PDPS} and  $\widetilde{X}(s)=(\widetilde{x}(s),\widetilde{y}(s),\widetilde{\zeta}(s),\widetilde{\eta}(s))$ for the intrinsic IC-PDPS ODE \cref{eq:int-ode-ic-pdps}. We simply take $K=1$ and quadratic objectives: $G(x)=x^2/2,\,F^*(y) = y^2/2$ in one dimension.  From the left and middle, we see smaller $\alpha$ leads to better approximations of the continuous trajectories. In the right, we also report the convergence behavior of the objective in terms of the time $s_i=\alpha i\in[0,20]$.}
	\label{fig:icpdps-intode}
\end{figure}
\begin{figure}[H]
	\centering
	\subfigure{
		\includegraphics[width=5cm]{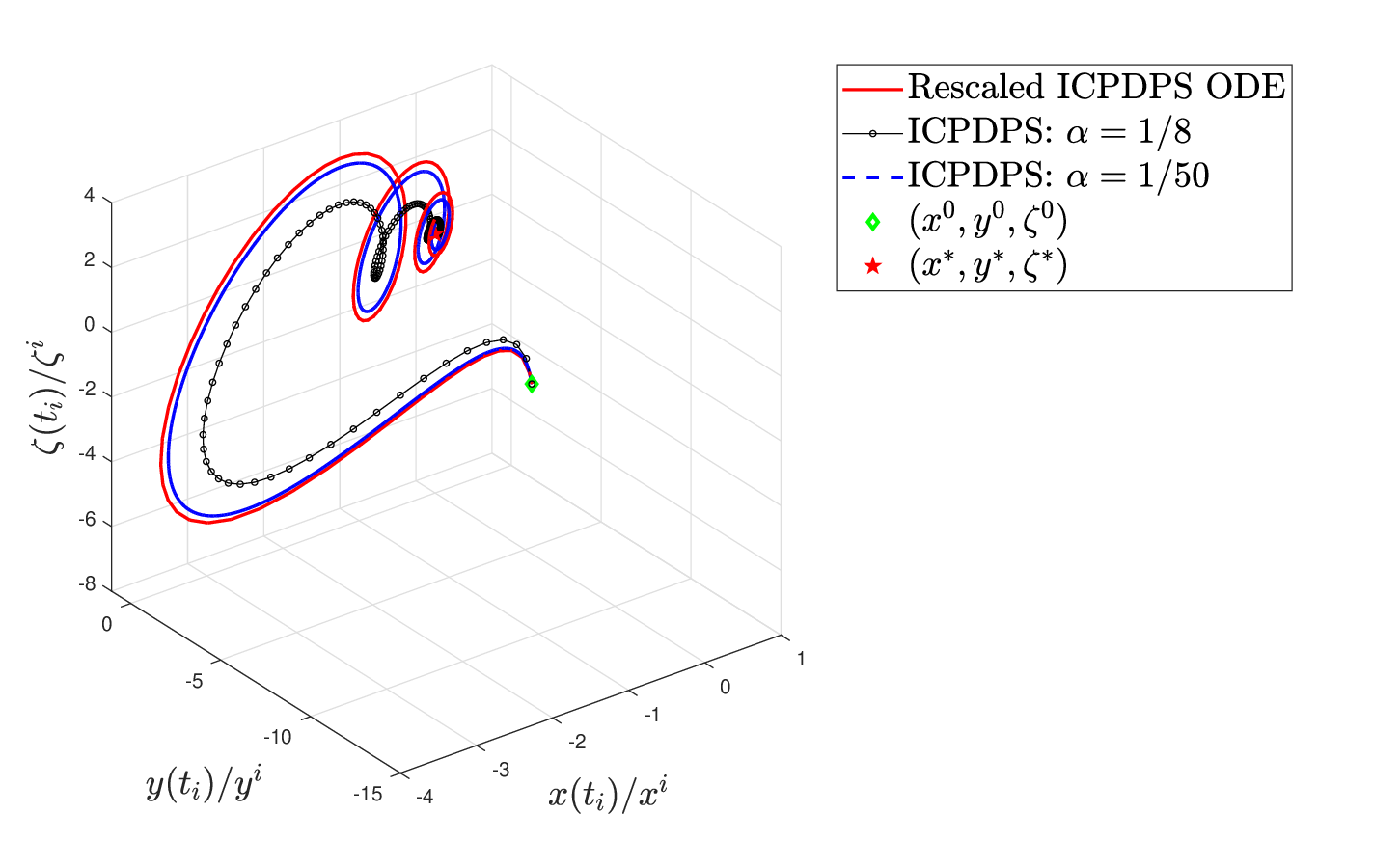}
		\label{fig:icpdps-resode-xyzeta}
	}
	\subfigure{
		\includegraphics[width=5cm]{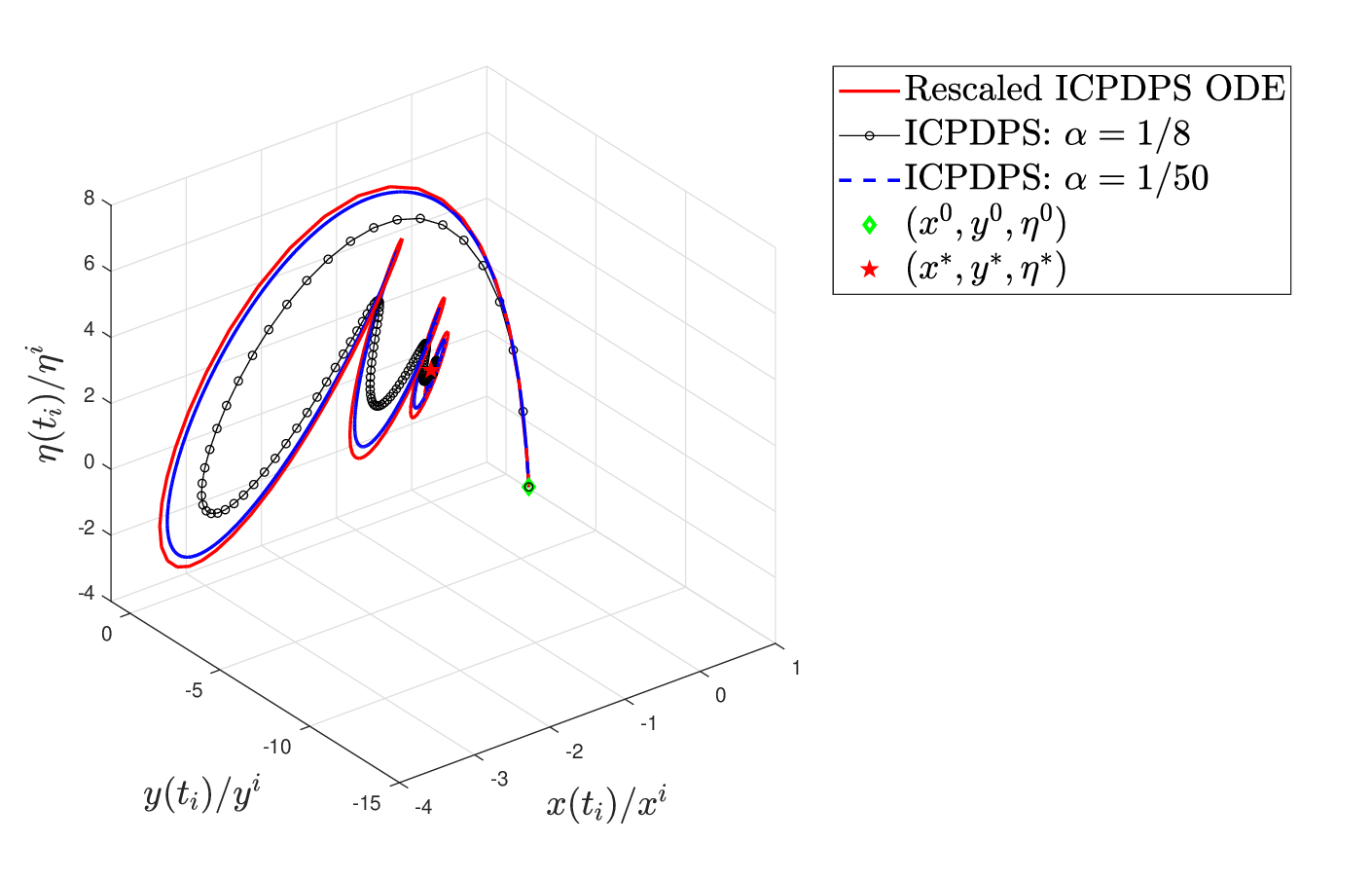}
		\label{fig:icpdps-resode-xyeta}
	}
	\quad    
	\subfigure{
		\includegraphics[width=4cm]{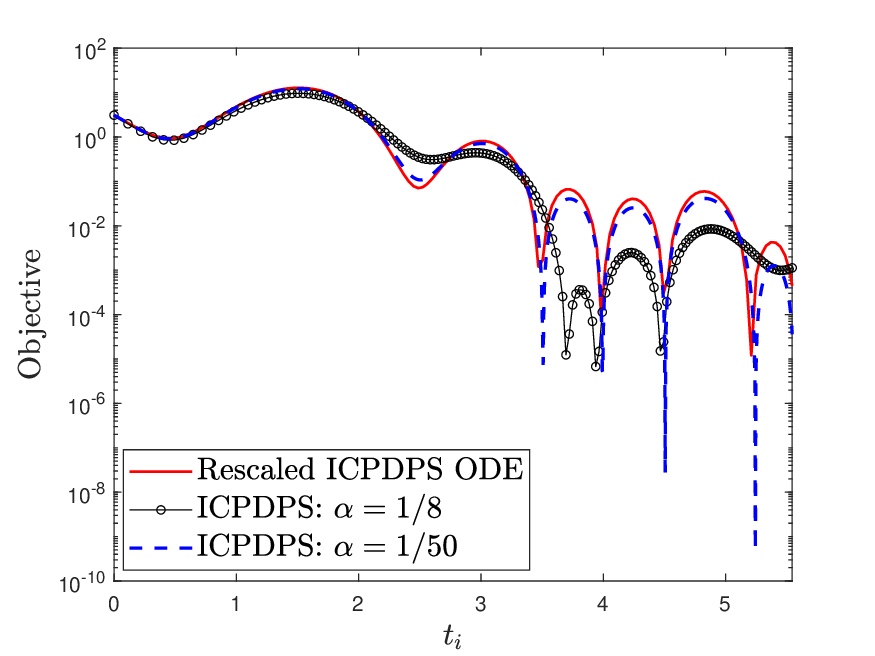}
		\label{fig:icpdps-resode-rate}
	}
	\caption{Numerical illustration of the trajectories $X^i=(x^i,y^i,\zeta^i,\eta^i)$ for \cref{algo:IC-PDPS} and  $X(t)=(x(t),y(t),\zeta(t),\eta(t))$ for the rescaled IC-PDPS ODE \cref{eq:ode-ic-pdps}. The objectives are similar with that in \cref{fig:icpdps-intode}. From the left and middle, we observe that $X(t)$ and $\widetilde{X}(s)$ are identical in the phase space. In the right, we show the convergence behavior of the objective with respect to the rescaled time $t_i$, which is related to the intrinsic time $s_i$ by the transformation \cref{eq:time-t-ic-pdps}.}
	\label{fig:icpdps-resode}
\end{figure} 
\subsection{Lyapunov analysis}
In this part, we present a Lyapunov analysis of the intrinsic IC-PDPS ODE \cref{eq:int-ode-ic-pdps}. Introduce the following Lyapunov function 
\begin{equation}\label{eq:lyap-int}
	\widetilde{	\mathcal E}(s): = \widetilde{\theta}(s)\left[\widehat{\mathcal{L}}(\widetilde{x}(s),\widehat{y})-\widehat{\mathcal{L}}(\widehat{x},\widetilde{y}(s))\right] + \frac{1}{2}\nm{\widetilde{z}(s)-\widehat{u}}^2_{\widetilde{\Upsilon}(s)},
\end{equation}
where $\widetilde{z}(s)=(\widetilde{\zeta}(s),\widetilde{\eta}(s))$ and $	\widetilde{\Upsilon}(s) =  {\rm diag}(
\widetilde{\phi}(s)I, \widetilde{\psi}(s)I )$. The main result is given below.
\begin{thm}\label{thm:conv-lyap-int}
	Let $(\widetilde{x}(s),\widetilde{y}(s),\widetilde{\zeta}(s),\widetilde{\eta}(s))$ be the smooth solution to \cref{eq:int-ode-ic-pdps}.
	The Lyapunov function \cref{eq:lyap-int} satisfies
	\begin{equation}\label{eq:conv-lyap-int}
		\widetilde{\theta}(s)\left[\widehat{\mathcal{L}}(\widetilde{x}(s),\widehat{y})-\widehat{\mathcal{L}}(\widehat{x},\widetilde{y}(s))\right] + \frac{1}{2}\nm{\widetilde{z}(s)-\widehat{u}}^2_{\widetilde{\Upsilon}(s)}\leq \widetilde{	\mathcal E}(0)\quad\forall\,s\geq 0.
	\end{equation}
	Moreover, we have the lower bound estimate
	\begin{equation} 
		\label{eq:pf-lb-Theta-int}
		\widetilde{\theta}(s)\geq  	\sqrt{\frac{\widetilde{\phi}(0)\widetilde{\psi}(0)}{3}}s +\frac{\gG\widetilde{\psi}(0)+\rF\widetilde{\phi}(0)}{6}s^2+\widetilde{\theta}(0)\exp\left(  \sqrt{\frac{\gG\rF}{3 }} 2s \right),
	\end{equation}
	for all $s\geq 0$.
\end{thm}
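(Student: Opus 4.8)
The plan is to prove the two claims of \cref{thm:conv-lyap-int} separately, since the first follows by transporting the conclusion of \cref{thm:conv-lyap} through the time transformation \cref{eq:time-t-ic-pdps}, while the second is a standalone differential-inequality argument on $\widetilde\theta$. For the decay estimate \cref{eq:conv-lyap-int}, the cleanest route is to invoke the equivalence already established in \cref{eq:int-res-para,eq:int-res-X-Y}: with $t=t(s)$ given by \cref{eq:time-t-ic-pdps}, we have $\widetilde\phi(s)=\phi(t),\widetilde\psi(s)=\psi(t),\widetilde\theta(s)=\theta(t)$ and likewise $\widetilde x(s)=x(t)$, etc. Substituting these into the definition \cref{eq:lyap-int} of $\widetilde{\mathcal E}(s)$ shows $\widetilde{\mathcal E}(s)=\mathcal E(t(s))$, where $\mathcal E$ is the Lyapunov function \cref{eq:lyap} of the rescaled ODE. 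By \cref{thm:conv-lyap} we have $\mathcal E(t)\le\mathcal E(0)$ for all $t\ge0$, and since $t(0)=0$ gives $\widetilde{\mathcal E}(0)=\mathcal E(0)$, the bound \cref{eq:conv-lyap-int} is immediate. Alternatively, one can mimic the proof of \cref{thm:conv-lyap} directly: differentiate \cref{eq:lyap-int}, use the ODE \cref{eq:int-ode-ic-pdps} to rewrite $\widetilde\zeta',\widetilde\eta'$, bound the resulting cross term by convexity of $\widehat{\mathcal L}$ exactly as in the $\mathbb I_1$ estimate, and observe that the parameter relations \cref{eq:res-ode-para} force $\widetilde\Upsilon'(s)=2D\,\widetilde\theta'(s)$, which cancels the residual quadratic term.

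For the lower bound \cref{eq:pf-lb-Theta-int}, I would work entirely with the parameter system \cref{eq:res-ode-para}. The key preliminary observation is that $\widetilde\phi$ and $\widetilde\psi$ are nondecreasing (their derivatives are nonnegative), so $\widetilde\phi(s)\ge\widetilde\phi(0)$ and $\widetilde\psi(s)\ge\widetilde\psi(0)$; this already yields the crude bound $\widetilde\theta'(s)=\sqrt{\widetilde\phi(s)\widetilde\psi(s)}\ge\sqrt{\widetilde\phi(0)\widetilde\psi(0)}$, which after integration produces the linear term. To capture the quadratic and exponential terms I would seek a sharper lower bound on the product $\widetilde\phi(s)\widetilde\psi(s)$. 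Writing $p(s):=\widetilde\phi(s)\widetilde\psi(s)$ and differentiating using \cref{eq:res-ode-para} gives $p'(s)=\widetilde\phi'\widetilde\psi+\widetilde\phi\widetilde\psi'=2(\gG\widetilde\psi+\rF\widetilde\phi)\sqrt{p}$. By the AM--GM inequality $\gG\widetilde\psi+\rF\widetilde\phi\ge2\sqrt{\gG\rF\,\widetilde\phi\widetilde\psi}=2\sqrt{\gG\rF}\,\sqrt p$, so $p'(s)\ge4\sqrt{\gG\rF}\,p$, giving the exponential growth $p(s)\ge p(0)e^{4\sqrt{\gG\rF}\,s}$ that feeds the exponential term in $\widetilde\theta$.

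The main obstacle is assembling these three contributions --- linear, quadratic, exponential --- into the single clean lower bound \cref{eq:pf-lb-Theta-int} with its specific constants $\sqrt{1/3}$ and $1/6$, since the three growth mechanisms are intertwined rather than additive. I expect the correct strategy is to bound $\widetilde\theta'(s)=\sqrt{p(s)}$ from below termwise by estimating $\widetilde\phi(s)$ and $\widetilde\psi(s)$ individually. Integrating \cref{eq:res-ode-para} and using the monotonicity bounds one gets lower bounds of the form $\widetilde\phi(s)\ge\widetilde\phi(0)+2\gG\sqrt{\widetilde\phi(0)\widetilde\psi(0)}\,s$ and symmetrically for $\widetilde\psi$; multiplying these and taking the square root produces, via the elementary inequality $\sqrt{a+b+c}\ge\tfrac{1}{\sqrt3}(\sqrt a+\sqrt b+\sqrt c)$ applied to the three-term expansion, exactly the three pieces with the factor $1/\sqrt3$. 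Integrating $\widetilde\theta'(s)=\sqrt{p(s)}$ from $0$ to $s$ then yields the linear term $\sqrt{\widetilde\phi(0)\widetilde\psi(0)/3}\,s$, the quadratic term $\tfrac{1}{6}(\gG\widetilde\psi(0)+\rF\widetilde\phi(0))s^2$, and the exponential term anchored at $\widetilde\theta(0)$. The delicate bookkeeping is matching the constants, so I would verify each term's coefficient against the crude integrations above before combining.
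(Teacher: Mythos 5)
Your treatment of the first claim \cref{eq:conv-lyap-int} is exactly the paper's: write $\widetilde{\mathcal E}(s)=\mathcal E(t(s))$ via \cref{eq:int-res-para,eq:int-res-X-Y}, differentiate through the chain rule (picking up the positive factor $\widetilde\theta'(s)/\widetilde\theta(s)=\sqrt{\widetilde\phi(s)\widetilde\psi(s)}/\widetilde\theta(s)$), and invoke \cref{eq:diff-lyap}. That part is fine.

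For the lower bound \cref{eq:pf-lb-Theta-int} there is a genuine gap. Your final strategy bounds $\widetilde\phi(s)$ and $\widetilde\psi(s)$ from below by \emph{affine functions of $s$}, multiplies them, and integrates $\widetilde\theta'(s)\geq\sqrt{p(s)}$. But integrating a lower bound for $\widetilde\theta'$ that is polynomial in $s$ can only ever produce polynomial growth of $\widetilde\theta$: the product $(a_0+a_1s)(b_0+b_1s)$ expands into $1$, $s$, $s^2$ terms, so after the $\sqrt{a+b+c}\geq(\sqrt a+\sqrt b+\sqrt c)/\sqrt3$ split and integration you obtain terms of order $s$, $s^{3/2}$ and $s^2$ --- note in particular that the middle term comes out as $s^{3/2}$, not the claimed $\tfrac{1}{6}(\gG\widetilde\psi(0)+\rF\widetilde\phi(0))s^2$, and the exponential term $\widetilde\theta(0)\exp(2\sqrt{\gG\rF/3}\,s)$ is unreachable by this route. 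Your second paragraph does identify the exponential mechanism ($p'\geq4\sqrt{\gG\rF}\,p$), but that observation is then dropped, and even kept it would yield an exponential with rate $2\sqrt{\gG\rF}$ and prefactor $\sqrt{\widetilde\phi(0)\widetilde\psi(0)}/(2\sqrt{\gG\rF})$, which does not assemble additively with the other two pieces into \cref{eq:pf-lb-Theta-int}.

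The missing idea is to eliminate $s$ in favour of $\widetilde\theta$ itself: integrating \cref{eq:res-ode-para} exactly gives $\widetilde\phi(s)=\widetilde\phi(0)+2\gG R(s)$ and $\widetilde\psi(s)=\widetilde\psi(0)+2\rF R(s)$ with $R(s)=\widetilde\theta(s)-\widetilde\theta(0)$, so that
\[
R'(s)=\sqrt{\widetilde\phi(0)\widetilde\psi(0)+2\bigl(\gG\widetilde\psi(0)+\rF\widetilde\phi(0)\bigr)R(s)+4\gG\rF R^2(s)}\,.
\]
Applying the $1/\sqrt3$ inequality to \emph{this} three-term expression (constant, $R$, $R^2$) gives $R'\geq\frac{1}{\sqrt3}\bigl(\sqrt{\widetilde\phi(0)\widetilde\psi(0)}+\sqrt{2(\gG\widetilde\psi(0)+\rF\widetilde\phi(0))}\sqrt{R}+2\sqrt{\gG\rF}\,R\bigr)$; the self-referential term $2\sqrt{\gG\rF}\,R/\sqrt3$ is precisely what generates the exponential with rate $2\sqrt{\gG\rF/3}$. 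One then checks that the candidate $Q=Q_1+Q_2+Q_3$ built from the three terms of \cref{eq:pf-lb-Theta-int} satisfies the reverse differential inequality (using $Q_2\leq Q$ and $Q_3\leq Q$) and concludes $R\geq Q$ by the comparison principle of McNabb. You guessed the right elementary inequality but applied it to the wrong expansion, and the comparison-principle step is absent from your outline.
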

\begin{proof}
	Observing \cref{eq:int-res-para,eq:int-res-X-Y}, we claim that $
	\widetilde{\mathcal E}(s) =\mathcal E(t(s))$ and it follows
	\[
	\frac{\dd}{\dd s}\widetilde{\mathcal E}(s) =\mathcal E'(t(s))\cdot 	\frac{\dd}{\dd s}t(s) \overset{\text{by \cref{eq:time-t-ic-pdps}}}{=}\mathcal E'(t(s)) \frac{\widetilde{\theta}'(s)}{\widetilde{\theta}(s)}\overset{\text{by \cref{eq:res-ode-para}}}{=}\frac{ \sqrt{\widetilde{\phi}(s)\widetilde{\psi}(s)}}{\widetilde{\theta}(s)}\mathcal E'(t(s))\overset{\text{by \cref{eq:diff-lyap}}}{\leq} 0,
	\]
	which implies \cref{eq:conv-lyap-int}.
	
	It remains to prove \cref{eq:pf-lb-Theta-int}. Notice that by \cref{eq:res-ode-para}, we have
	\[
	\widetilde{\phi}(s) =	\widetilde{\phi}(0)+ 2\gG(\widetilde{\theta}(s)-\widetilde{\theta}(0)),\quad 	\widetilde{\psi}(s) =	\widetilde{\psi}(0)+ 2\rF(\widetilde{\theta}(s)-\widetilde{\theta}(0)).
	\]
	For ease of notation, let $R(s) = \widetilde{\theta}(s)-\widetilde{\theta}(0)$. Then it holds that 
	\[
	\begin{aligned}
		R'(s)=	{}&	\widetilde{\theta}'(s) = \sqrt{	\widetilde{\phi}(s)  	\widetilde{\psi}(s)}=\sqrt{\widetilde{\phi}(0)+ 2\gG R(s)}\cdot \sqrt{\widetilde{\psi}(0)+ 2\rF R(s)}\\
		={}&\sqrt{\widetilde{\phi}(0)\widetilde{\psi}(0)+2(\gG\widetilde{\psi}(0)+\rF\widetilde{\phi}(0))R(s)+4\gG\rF R^2(s) }\\
		\geq {}&\frac{1}{\sqrt{3}}\left(\sqrt{\widetilde{\phi}(0)\widetilde{\psi}(0)}+\sqrt{2\gG\widetilde{\psi}(0)+2\rF\widetilde{\phi}(0)}\sqrt{R(s)}+2\sqrt{\gG\rF} R(s) \right).
	\end{aligned}
	\]
	Define $Q(s):=		Q_1(s)+		Q_2(s)+Q_3(s)$ with
	\[
	\begin{aligned}
		Q_1(s)
		={}&\sqrt{\frac{\widetilde{\phi}(0)\widetilde{\psi}(0)}{3}}s,\quad Q_2(s)=\frac{\gG\widetilde{\psi}(0)+\rF\widetilde{\phi}(0)}{6}s^2,\\
		Q_3(s)	={}&\widetilde{\theta}(0)\exp\left(  \sqrt{\frac{\gG\rF}{3 }} 2s \right)-\widetilde{\theta}(0).
	\end{aligned}
	\]		
	Observe that $R(0)=Q(0)=0$ and 
	\[
	\begin{aligned}
		Q'(s) ={}& Q'_1(s)  + Q'_2(s)+Q_3'(s) \\ 
		={}&\frac{1}{\sqrt{3}}\left(\sqrt{\widetilde{\phi}(0)\widetilde{\psi}(0)}+\sqrt{2\gG\widetilde{\psi}(0)+2\rF\widetilde{\phi}(0)}\sqrt{Q_2(s)}+2\sqrt{\gG\rF} Q_3(s) \right)\\
		\leq {}&\frac{1}{\sqrt{3}}\left(\sqrt{\widetilde{\phi}(0)\widetilde{\psi}(0)}+\sqrt{2\gG\widetilde{\psi}(0)+2\rF\widetilde{\phi}(0)}\sqrt{Q(s)}+2\sqrt{\gG\rF} Q(s) \right).
	\end{aligned}
	\]
	Therefore, invoking the comparison principle \cite{McNabb1986}, we obtain $R(s)\geq Q(s)$, which implies \cref{eq:pf-lb-Theta-int} and  completes the proof of this theorem.
\end{proof}
\section{Concluding Remarks}
\label{sec:conclu}
In this work, we give a continuous perspective on the inertial corrected primal-dual proximal splitting (IC-PDPS). To handle the complicated combination of iterative sequences and parameters, we introduce the idea of rescaled step size and reformulate IC-PDPS as a collection of first-order finite difference templates. The continuous-time limit admits the same second-order in time feature as many accelerated ODEs but involves additional symmetric velocity correction. We also propose tailored Lyapunov functions and establish the convergence rate of the Lagrangian gap.

This alternative point of view of IC-PDPS also suggests some interesting future topics. Let us discuss more as follows.

\vskip0.1cm\noindent{\bf Velocity correction.} 
Recall the rescaled IC-PDPS ODE in component wise (cf.\cref{eq:ic-pdps-2nd-ode})
\[
\left\{
\begin{aligned}
	{}&	\frac{ 	\phi(t)}{\theta(t)} x''(t)+\left[\gG+\frac{ 	\phi(t)}{\theta(t)} \right]x'(t)  + \nabla G(x(t))+K^*(y(t)+y'(t))= 0,\\
	{}&	\frac{ 	\psi(t)}{\theta(t)} y''(t)+\left[\rF+\frac{ 	\psi(t)}{\theta(t)} \right]y'(t)  + \nabla F^*(y(t))-K(x(t)+x'(t))= 0.	
\end{aligned}
\right.
\]
When $K$ vanishes, the primal and dual variables are decoupled and the continuous model is very close to the NAG flow in \cite{Luo2022d}. However, the existence of the velocity correction for general case $K\neq O$ looks very subtle. Following the analysis for continuous-time Chambolle-Pock \cite{luo_primal-dual_2022}, some further efforts (such as spectral analysis) shall be paid to understand this.

\vskip0.1cm\noindent{\bf Non-Euclidean setting.} 
Note that both IC-PDPS and its continuous models derived in this paper consider only the standard inner product (Euclidean setting). It deserves proper extensions under general metrics such as Bregman distances and preconditioners.

\vskip0.1cm\noindent{\bf High-resolution model.}
According to the high-resolution analysis \cite{shi_understanding_2021}, Nesterov acceleration involves hidden second-order information as gradient correction which makes it more stable than the Heavy-ball method. It is interesting to see whether this exists in the primal-dual setting. This also motivates us to deriving the high-resolution IC-PDPS ODE with Hessian-driven damping.

\vskip0.1cm\noindent{\bf Long time error estimate.} The current work provides only the Lyapunov analysis of the continuous trajectory $X(t)$. This actually implies the convergence behavior for the long time case $t\to\infty$. Taking the time discretization presentations \cref{eq:diff-x-y-int,eq:diff-zeta-eta-int} into account, one can consider the numerical analysis aspect and establish the error estimate with fixed time $T>0$:
\begin{equation}\label{eq:error}
	\max_{1\leq i\leq N} \nm{X(t_i)-X^i}\leq C(T)\alpha,\quad N = T/\alpha,
\end{equation}
where $C(T)>0$ depends on $T$. This has at least been verified numerically by our simple examples in \cref{fig:icpdps-intode,fig:icpdps-resode}. Since we are care more about the limiting case $T\to\infty$, a stronger version of \cref{eq:error} is important. This leads to the {\it long time error estimate conjecture}
\[
\nm{X(t_i)-X^i}\leq C\alpha,\quad \forall\,i\in\mathbb N,
\]
where $C>0$ is independent of $i$ and $\alpha$. We left these interesting topics as our future works.

\begin{appendices}
	\section{Missing Proofs in \cref{sec:NAG}}
	\subsection{Sharp estimate of the parameter $\lambda_i$ in NAG}
	\label{app:pf-ub-lb-lambdai}
	\begin{lem}\label{lem:ub-lb-lambdai}
		Given $\lambda_0>0$, the positive sequence $\{\lambda_i:\,i\in\mathbb N\}$ defined in \cref{eq:NAG} satisfies $0\leq \lambda_{i}-\lambda_{i+1}\leq \lambda_i^2$ and 
		\begin{equation}\label{eq:ub-lb-lambdai}
			\frac{2}{i+2/\lambda_0+C(\lambda_0,i)}\leq 	\lambda_i\leq \frac{2}{i+2/\lambda_0} \quad\forall\,i\in\mathbb N,
		\end{equation}
		where $C(\lambda_0,i):=\lambda_0/4+1/2\ln(1+\lambda_0i/2)$. 
	\end{lem}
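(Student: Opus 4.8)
The plan is to pass to the reciprocal sequence $a_i := \lambda_i^{-1}$, which linearizes the recurrence. First I would square the defining relation $\lambda_{i+1}^{-1} = 1/2 + \sqrt{\lambda_i^{-2} + 1/4}$ to obtain the clean identity $a_{i+1}^2 - a_i^2 = a_{i+1}$ (equivalently $a_{i+1} = 1/2 + \sqrt{a_i^2 + 1/4}$, the third line of \cref{eq:diff-NAG}). Since $a_{i+1} > 0$, this forces $a_{i+1}^2 > a_i^2$, hence $a_i$ is strictly increasing and $\lambda_i$ strictly decreasing, giving the monotonicity claim $\lambda_{i+1} \leq \lambda_i$. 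Writing $d_i := a_{i+1} - a_i$ and factoring $a_{i+1}^2 - a_i^2 = (a_{i+1} - a_i)(a_{i+1} + a_i)$ yields the explicit form $d_i = a_{i+1}/(a_{i+1} + a_i)$, from which $1/2 < d_i < 1$ is immediate.

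For the difference estimate I would compute $\lambda_i - \lambda_{i+1} = d_i/(a_i a_{i+1}) = 1/(a_i(a_i + a_{i+1}))$ and bound the denominator below by $a_i^2$, which gives $\lambda_i - \lambda_{i+1} < \lambda_i^2$. The upper bound $\lambda_i \leq 2/(i + 2/\lambda_0)$ then follows from $d_i > 1/2$: telescoping $a_i = a_0 + \sum_{j<i} d_j$ gives $a_i \geq a_0 + i/2 = (i + 2/\lambda_0)/2$ (with equality only at $i=0$), and inverting produces the claimed estimate.

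The delicate part is the matching lower bound, which hinges on the sharp constant. Here I would rationalize $\sqrt{4a_i^2 + 1} - 2a_i = 1/(\sqrt{4a_i^2 + 1} + 2a_i)$ to get the exact identity $d_i = 1/2 + 1/(2(\sqrt{4a_i^2 + 1} + 2a_i))$. Since $\sqrt{4a_i^2 + 1} > 2a_i$, this delivers the sharp inequality $d_i < 1/2 + 1/(8a_i)$. Telescoping and feeding in the already-proved bound $a_j \geq a_0 + j/2$ converts the correction into $\tfrac14 \sum_{j=0}^{i-1}(j + 2/\lambda_0)^{-1}$; isolating the $j=0$ term $\lambda_0/2$ and bounding the tail by an integral, $\sum_{j=1}^{i-1}(j+2/\lambda_0)^{-1} \leq \ln(1 + i\lambda_0/2)$, produces exactly $C(\lambda_0,i)/2$. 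Thus $a_i < a_0 + i/2 + C/2$, which inverts to $\lambda_i \geq 2/(i + 2/\lambda_0 + C)$.

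The main obstacle I anticipate is tracking the constant precisely. The whole content of the lower bound is the factor $1/8$: the naive estimate $d_i < 1$ only yields $1/(4a_i)$ and a value of $C$ twice too large, so I must exploit the explicit form of $d_i$ rather than crude bounds, and then match the integral-comparison output term-by-term against the definition of $C(\lambda_0,i)$.
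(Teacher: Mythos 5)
Your proposal is correct and follows essentially the same route as the paper: an exact formula for the increment $\lambda_{i+1}^{-1}-\lambda_i^{-1}=1/2+(\text{correction})$, the sharp bound (correction) $\le\lambda_i/8$, telescoping against the already-proved upper bound, and an integral comparison yielding the $\lambda_0/4+\tfrac12\ln(1+\lambda_0 i/2)$ term. The only cosmetic difference is that you work with $a_i=\lambda_i^{-1}$ throughout, whereas the paper manipulates $\lambda_i$ directly; the algebra and constants are identical.
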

	\begin{proof}
		Since $\lambda_{i+1}^{-1}  =1/2+   \sqrt{\lambda_{i}^{-2}+1/4}$, it follows that 
		\[
		\begin{aligned}
			\lambda_{i+1}-\lambda_i ={}&  \frac{2\lambda_i-\lambda_i^2-\lambda_i\sqrt{4+\lambda_i^2}}{\lambda_i+\sqrt{4+\lambda_i^2}}=-\frac{2\lambda_i^2}{2+\lambda_i+\sqrt{4+\lambda_i^2}},
		\end{aligned}
		\]
		which yields $0\leq \lambda_{i}-\lambda_{i+1}\leq\lambda_i^2$. Besides, we have
		\[
		\lambda_{i+1}^{-1}-\lambda_i^{-1} = \frac{\lambda_i+\sqrt{4+\lambda_i^2}}{2+\lambda_i+\sqrt{4+\lambda_i^2}} = \frac{1}{2} + \frac{\lambda_i}{4+2\sqrt{4+\lambda_i^2}}.
		\]
		For any $N\geq 1$, taking summation over $0\leq i\leq N-1$ gives
		\[
		\lambda_N^{-1}-\lambda_0^{-1} = \frac{N}{2}+\sum_{i=0}^{N-1}\frac{\lambda_i}{4+2\sqrt{4+\lambda_i^2}}.
		\]
		On the one hand, this implies 
		\[
		\lambda_N^{-1}-\lambda_0^{-1} \geq \frac{N}{2} \quad\Longrightarrow\quad \lambda_N\leq \frac{2}{N+2/\lambda_0}.
		\]
		In addition, we see that
		\[
		\lambda_N^{-1}-\lambda_0^{-1} \leq  \frac{N}{2}+\sum_{i=0}^{N-1}\frac{\lambda_i}{8}\leq \frac{N}{2}+\frac1{4}\sum_{i=0}^{N-1}\frac{1}{i+2/\lambda_0}
		\leq \frac{N}{2}+\frac{1}{4} \left(\lambda_0/2+\ln(1+\lambda_0N/2)\right).
		\]
		Hence, the lower bound follows directly
		\[
		\lambda_N\geq 		\frac{2}{N+2/\lambda_0+C(\lambda_0,N)},
		\]
		where $C(\lambda_0,N)=\lambda_0/4+1/2\ln(1+\lambda_0N/2)$. 
		This finishes the proof of \cref{lem:ub-lb-lambdai}.
	\end{proof}
	
	\subsection{Derivation of \cref{eq:NAG-int-ode}}
	\label{app:NAG-int-ode}
	Recall the intrinsic time $
	s_i = i\sqrt{\tau}$ and the ansatz 
	\[
	\widetilde{X}(s_i) = (\widetilde{x}(s_i ),\, \widetilde{{z}}(s_i ),\, \widetilde{\theta}(s_i )) = (x^i,z^i,\sqrt{\tau}/\lambda_i).
	\]
	Clearly, one has
	\[
	\frac{\widetilde{X}(s_{i+1})-\widetilde{X}(s_i)}{\sqrt{\tau}} = \widetilde{X}'(s_i) + O(\sqrt{\tau}).
	\]
	By \cref{eq:barxi} we have 
	\[
	\begin{aligned}
		\bar x^i={}&x^{i+1}+\lambda_i(z^i-z^{i+1})=\widetilde{x}(s_{i+1})+\sqrt{\tau}(\widetilde{z}(s_i)-\widetilde{z}(s_{i+1}))/\widetilde{\theta}(s_i)
		={}\widetilde{x}(s_i)+O(\sqrt{\tau}),
	\end{aligned}
	\]
	which implies $		\nabla G(\bar x^i) 
	={}\nabla G(\widetilde{x}(s_i) ) +O(\sqrt{\tau})$. 
	Plugging this into \cref{eq:intrinsic-NAG-ode} gives 
	\[
	\left\{
	\begin{aligned}
		{}& \widetilde{z}'(s_i)=- \widetilde{\theta}(s_i ) \nabla G(\widetilde{x}(s_i) ) +O(\sqrt{\tau}) , \\
		{}&\widetilde{\theta}(s_i )\widetilde{x}'(s_i)=\widetilde{z}(s_i)-\widetilde{x}(s_i) +O(\sqrt{\tau}),\\
		{}&2\widetilde{\theta}(s_i )\widetilde{\theta}'(s_i )=\widetilde{\theta}(s_i )+O(\sqrt{\tau}).
	\end{aligned}
	\right.
	\]
	Consequently, letting $\sqrt{\tau}\to0$ implies the continuous model \cref{eq:NAG-int-ode}.
	\subsection{Derivation of \cref{eq:NAG-res-ode}}
	\label{app:NAG-res-ode}
	Let $i\in\mathbb N$ and $t_i=\sum_{j=0}^{i-1}\lambda_j$ be fixed. Using Taylor expansion yields 
	\[
	\frac{X(t_{i+1})-X(t_i)}{\lambda_i} = X'(t_i) + O(\lambda_i).
	\]
	In view of \cref{eq:barxi}, it is easy to obtain $
	\nabla G(\bar x^i) 
	={}\nabla G(x(t_i) ) +O(\lambda_i)$. Hence, recalling	\cref{eq:rescale-NAG-ode}, we have 
	\[
	\left\{
	\begin{aligned}
		{}&z'(t_i)=-  \theta^2(t_i ) \nabla G(x(t_i) ) +O(\lambda_i) , \\
		{}&x'(t_i)=z(t_i)-x(t_i) +O(\lambda_i),\\
		{}&2 \theta (t_i ) \theta'(t_i )= \theta^2(t_i )+O(\lambda_i).
	\end{aligned}
	\right.
	\]
	Note that $t_i$ is independent with $\lambda_i$. Thus, taking $\lambda_i\to0$ leads to the rescaled model \cref{eq:NAG-res-ode}.
\end{appendices}
\begin{appendices}
	\section{Missing Proofs in \cref{sec:res-ode-icpdps}}
	\subsection{Proof of \cref{eq:pf-key-est}}
	\label{app:pf-key-est}
	Since $\nm{K}^2\Theta_0^2=\Phi_0\Psi_0$, we have $\lambda_0=1$, which implies $\Phi_1=\Phi_0+2\gG\Theta_0$ and $\Psi_1 = \Psi_0 + 2\rF\Theta_0$. In view of \cref{eq:para-icpdps}, we obtain
	\begin{equation}\label{eq:Phi-Psi}
		\begin{aligned}
			{}&	\Phi_{i+1}-\Phi_i= 2\gG (\Theta_{i}-\Theta_{i-1})\,\Longrightarrow\, \Phi_{i+1} = \Phi_1+2\gG (\Theta_{i}-\Theta_{0})=\Phi_0+2\gG \Theta_{i},\\
			{}&	
			\Psi_{i+1}-\Psi_{i}= 2\rF(\Theta_{i}-\Theta_{i-1}) \,\Longrightarrow\, \Psi_{i+1} =\Psi_1+2\rF(\Theta_{i}-\Theta_{0}) =\Psi_0+2\rF \Theta_{i}.
		\end{aligned}
	\end{equation}
	Inserting this into \cref{eq:mid} gives 
	\[
	\lambda_{i+1} =\frac{1-\lambda_{i+1}}{S_i},\quad S_i:=  \frac{\nm{K}\Theta_i}{\sqrt{\Phi_0+2\gG \Theta_{i}}\sqrt{\Psi_0+2\rF \Theta_{i}}}.
	\]
	Note that $\Theta_i$ is monotone increasing and so is $S_i$. In view of $\lambda_{i+1} = 1/(1+S_i)$, it is clear that $0<\lambda_{i+1}\leq \lambda_i\leq 1$ for all $i\in\mathbb N$.
	
	It remains to prove \cref{eq:pf-key-est}. Recall that (cf.\cref{eq:Ai})
	\[
	A_i=  1 +\frac{4\gG \rF}{\nm{K}^2}+  \frac{2\gG \Psi_i+2\rF\Phi_i}{\nm{K}\sqrt{\Phi_i\Psi_i}} \geq 1,
	\]
	which means \cref{eq:pf-key-est} is trivial for $i=0$. In the following, assume $i\geq1$ and consider
	\[
	\begin{aligned}
		A_i -1= {}& \frac{4\gG \rF}{\nm{K}^2}+ \frac{2\gG \Psi_i+2\rF\Phi_i}{\nm{K}\sqrt{\Phi_i\Psi_i}} \overset{\text{by \cref{eq:cond-lambdai}}}{=} \frac{4\gG \rF}{\nm{K}^2}+\frac{2\gG \Psi_i+2\rF\Phi_i}{\nm{K}^2\lambda_i\Theta_i}\\
		\overset{\text{by \cref{eq:Phi-Psi}}}{=}{}& \frac{4\gG \rF}{\nm{K}^2}+\frac{8\gG\rF\Theta_{i-1}+4\gG \Psi_0+4\rF\Phi_0}{\nm{K}^2\lambda_i\Theta_i}\\
		\leq {}&\frac{1}{\nm{K}^2}\underbrace{\frac{4\gG \rF\lambda_i+8\gG \rF}{\lambda_i} }_{\mathbb I_1}+\frac{1}{\nm{K}^2}
		\underbrace{	\frac{4\gG \Psi_0+4\rF\Phi_0}{\lambda_i\Theta_i}}_{\mathbb I_2},
	\end{aligned}
	\]
	where in the last step we used the monotone relation $\Theta_{i-1}\leq \Theta_i$.
	
	Let us estimate $\mathbb I_1$ and $\mathbb I_2$ one by one. In view of \cref{eq:mid,eq:Phi-Psi}, it holds that (recall $i\geq 1$)
	\[
	\lambda_{i}
	= 
	\frac{\sqrt{\Phi_{i}\Psi_{i}}}{\nm{K}\Theta_{i-1}}(1-\lambda_{i})\geq \frac{2\sqrt{\gG\rF}}{\nm{K}}(1-\lambda_{i})\quad\Longrightarrow\quad \lambda_{i}\geq \frac{2\sqrt{\gG\rF}}{\nm{K}+2\sqrt{\gG\rF}},
	\]
	which implies
	\begin{equation}\label{eq:est-1}\small
		\begin{aligned}
			\mathbb I_1= \frac{4\gG \rF\lambda_i+8\gG \rF}{\lambda_i} \leq {}&
			(\nm{K}+2\sqrt{\gG\rF})^2\left(\lambda_i^2+2 \lambda_i\right)
			\leq {}	6\lambda_i\left(\nm{K}^2+4\gG\rF\right).
		\end{aligned}
	\end{equation}
	On the other hand, we also have
	\[
	\lambda_{i}= \frac{\sqrt{ \Phi_i\Psi_{i}}}{\nm{K}\Theta_{i}}\geq \frac{\sqrt{ 2(\gG \Psi_0+\rF\Phi_0) \Theta_{i-1}}}{\nm{K}\Theta_{i}}=
	\frac{\sqrt{ 2(\gG \Psi_0+\rF\Phi_0) }}{\nm{K}\sqrt{\Theta_{i-1}}}(1-\lambda_i),
	\]
	which leads to
	\[
	\lambda_i\geq \frac{\sqrt{2(\gG \Psi_0+\rF\Phi_0)}}{\nm{K}\sqrt{\Theta_{i-1}}+\sqrt{2(\gG \Psi_0+\rF\Phi_0)}}
	\geq   \frac{\sqrt{2(\gG \Psi_0+\rF\Phi_0)}}{\nm{K}+\sqrt{2(\gG \Psi_0+\rF\Phi_0)/\Theta_0}}\frac{1}{\sqrt{\Theta_{i}}}.
	\]
	Hence, we obtain 
	\[
	\begin{aligned}
		\mathbb I_2=		  \frac{4\gG \Psi_0+4\rF\Phi_0}{ \lambda_i\Theta_i}
		\leq
		{}& 2\lambda_i   \left(\nm{K}+\sqrt{2(\gG \Psi_0+\rF\Phi_0)/\Theta_0}\right)^2\\
		\leq {}&4\lambda_i \left(\nm{K}^2+\frac{2\gG \Psi_0+2\rF\Phi_0}{\Theta_0}\right).
	\end{aligned}
	\]
	Combining this with \cref{eq:est-1} gives 
	\[
	\begin{aligned}
		A_i-1\leq {}&6\lambda_i\left(1+\frac{4\gG\rF}{\nm{K}^2}\right)+4\lambda_i \left(1+\frac{2\gG \Psi_0+2\rF\Phi_0}{\nm{K}^2\Theta_0}\right)=C_0\lambda_i.
	\end{aligned}
	\]		
	This completes the proof of \cref{eq:pf-key-est}.
	\subsection{Proof of \cref{eq:diff-zeta-eta}}
	\label{app:pf-diff-icpdps}
	Let us start with  \cref{eq:CP-PPA-mod-IC}:
	\[
	\left\{
	\begin{aligned}
		{}&		0\in \tilde{H}_{i+1}(u^{i+1}) + M_{i+1}(u^{i+1}-\bar u^i)+\check{M}_{i+1}(u^{i+1}-u^i),\\
		{}&		\bar{u}^{i+1} = u^{i+1} + \Lambda_{i+2}(\Lambda_{i+1}^{-1}-I)(u^{i+1}-u^i).
	\end{aligned}
	\right.
	\]
	According to \cref{sec:icpdps}, the operator settings are
	\[
	\begin{aligned}
		\tilde{H}_{i+1}(u^{i+1}) = W_{i+1}H(u^{i+1}),\quad 	M_{i+1}  =\begin{pmatrix}
			I & -\mu_{i+1}^{-1}\tau_i K^*\\
			-\lambda_i^{-1}\sigma_{i+1}\omega_i K& I
		\end{pmatrix},\quad \\
		\check{M}_{i+1} = W_{i+1}\Gamma (\Lambda_{i+1}^{-1}-I),\quad 
		W_{i+1} = \diag{\tau_iI,\sigma_{i+1}I},\quad 
		\Lambda_{i+1}  = \diag{ 
			\lambda_iI,\mu_{i+1}I },
	\end{aligned}
	\]
	where $\omega_i = \frac{\lambda_{i}\phi_i\tau_i}{\lambda_{i+1}\phi_{i+1}\tau_{i+1}}$ and 
	\[
	\Gamma = \begin{pmatrix}
		\gG  I&K^*\\ -K& \rF I
	\end{pmatrix},\quad 
	H(u^{i+1}) =  \begin{pmatrix}
		\partial G(x^{i+1}) + K^*y^{i+1}\\\partial F^*(y^{i+1}) - Kx^{i+1}
	\end{pmatrix}.
	\]
	Observe the two identities:
	\[
	\begin{aligned}
		\check{M}_{i+1}(u^{i+1}- u^i) = {}&W_{i+1}\Gamma (\Lambda_{i+1}^{-1}-I)(u^{i+1}-  u^i) \overset{\text{by \cref{eq:zi}}}{=}W_{i+1}\Gamma  (z^{i+1}-  u^{i+1}),
	\end{aligned}
	\]
	and 
	\[
	\begin{aligned}
		{}& u^{i+1}-\bar  u^i = u^{i+1}-u^{i} - \Lambda_{i+1}(\Lambda_{i}^{-1}-I)(u^{i}-u^{i-1})\\
		={}&u^{i+1}-(I-\Lambda_{i+1})u^{i} - \Lambda_{i+1}\left[u^{i}+(\Lambda_{i}^{-1}-I)(u^{i}-u^{i-1})\right]
		={}\Lambda_{i+1}(z^{i+1}-z^i).
	\end{aligned}
	\]
	Reformulate the inclusion part by that
	\begin{equation}\label{eq:re-ic-pdps-u}
		0\in {} H_{i+1}(u^{i+1}) + W_{i+1}^{-1}M_{i+1}\Lambda_{i+1}(z^{i+1}-z^i)+\Gamma(z^{i+1}-u^{i+1}).
	\end{equation}
	A direct calculation yields that
	\[
	\begin{aligned}
		{}&	W^{-1}_{i+1}M_{i+}\Lambda_{i+1}= \diag{\tau_i^{-1}I,\sigma_{i+1}^{-1}I}M_{i+1}\diag{ 
			\lambda_iI,\mu_{i+1}I }\\
		={}&
		\diag{\tau_i^{-1}I,\sigma_{i+1}^{-1}I}\left(I + \begin{pmatrix}
			O & -\mu_{i+1}^{-1}\tau_i K^*\\
			-\lambda_i^{-1}\sigma_{i+1}\omega_i K& O
		\end{pmatrix}\right)\diag{ 
			\lambda_iI,\mu_{i+1}I }\\
		={}&\diag{\lambda_i/\tau_iI,\mu_{i+1}/\sigma_{i+1}I}
		+ \begin{pmatrix}
			O&-K^*\\-\omega_i K&O
		\end{pmatrix}.
	\end{aligned}
	\]
	Recall that $u^{i}=(x^i,y^i)$ and $z^i=(\zeta^i,\eta^i)$. It follows from \cref{eq:re-ic-pdps-u}  that 
	\[
	\begin{aligned}
		{}&\diag{\lambda_i/\tau_iI,\mu_{i+1}/\sigma_{i+1}I}(z^{i+1}-z^i)\\
		{}&\quad + \diag{\gG  I,\rF I}(z^{i+1}-u^{i+1}) \in-	  \begin{pmatrix}
			\partial G(x^{i+1})+	K^*\eta^i\\ \partial F^*(y^{i+1})
			-K\bar \zeta^{i+1}
		\end{pmatrix},
	\end{aligned}
	\]
	with $\bar \zeta^{i+1}=\zeta^{i+1}+\omega_i(\zeta^{i+1}-\zeta^i)$. In component wise, we obtain 
	\[
	\left\{
	\begin{aligned}	
		{}&	\lambda_i(\zeta^{i+1}-\zeta^i)\in {}\tau_i\left[\gG ( x^{i+1}-\zeta^{i+1})-\partial G(x^{i+1})-K^*\eta^i\right],\\
		{}& 	\mu_{i+1} (\eta^{i+1}-\eta^i) \in\sigma_{i+1}\left[\rF( y^{i+1}-\eta^{i+1})-\partial F^*(y^{i+1})+K\bar \zeta^{i+1}\right].
	\end{aligned}
	\right.
	\]
	To match the integrated sequences in \cref{eq:aggreg-icpdps}, we multiply the above two inclusions by $\phi_i$ and $\psi_{i+1}$ respectively and then use \cref{eq:para-icpdps-1} to get
	\[
	\small  
	\left\{
	\begin{aligned}	
		{}& 	\Phi_i   \frac{\zeta^{i+1}-\zeta^i}{\lambda_i} \in\Theta_{i}\left[\gG ( x^{i+1}-\zeta^{i+1})-\partial G(x^{i+1})-K^*\eta^i\right],\\
		{}&	\Psi_{i+1} \frac{\eta^{i+1}-\eta^i}{\lambda_{i+1}} \in \Theta_{i+1}\left[\rF( y^{i+1}-\eta^{i+1})-\partial F^*(y^{i+1})+K(\zeta^{i+1}+\omega_i(\zeta^{i+1}-\zeta^i))\right],
	\end{aligned}
	\right.
	\]
	where 
	\[
	\omega_i = \frac{\lambda_{i}\phi_i\tau_i}{\lambda_{i+1}\phi_{i+1}\tau_{i+1}} = \frac{\lambda_i\Theta_i}{\lambda_{i+1}\Theta_{i+1}} = \frac{\lambda_i(1-\lambda_{i+1})}{\lambda_{i+1}}=\frac{\lambda_i}{\lambda_{i+1}}-\lambda_i.
	\] 
	This proves \cref{eq:diff-zeta-eta}.
\end{appendices}

\bibliographystyle{abbrv}

\end{document}